\newcommand{\tfa}{time-frequency analysis}
\newcommand{\ft}{Fourier transform}
\newcommand{\stft}{short-time Fourier transform}
\newcommand{\tf}{time-frequency}
\newcommand{\fif}{if and only if}
\newcommand{\tfs}{time-frequency shift}
\newcommand{\modsp}{modulation space}
\newcommand{\rep}{representation}
\newtheorem{tm}{Theorem}[section]    
\newtheorem{prop}[tm]{Proposition}
\theoremstyle{definition}
\newtheorem{rem}{Remark}[section]
\newcommand{\beqa}{\begin{eqnarray*}}
\newcommand{\eeqa}{\end{eqnarray*}}
\newcommand{\field}[1]{\mathbb{#1}}
\newcommand{\bR}{\field{R}}        
\newcommand{\bN}{\field{N}}        
\newcommand{\bZ}{\field{Z}}        
\newcommand{\bC}{\field{C}}        
\newcommand{\bH}{\field{H}}        %
\def\cS{\mathcal{ S}}
\def\cH{\mathcal{ H}}
\def\cG{\mathcal{ G}}
\def\cU{\mathcal{ U}}
\def\cN{\mathcal{ N}}
\def\rd{\bR^d}
\def\cd{\bC^d}
\def\rdd{{\bR^{2d}}}
\def\lrd{L^2(\rd)}
\def\intrd{\int_{\rd}}
\def\intrdd{\int_{\rdd}}
\def\<{\left<}
\def\>{\right>}
\def\inv{^{-1}}
\def\mv1{M_v^1}
\def\Mmpq{M_m^{p,q}}
\newcommand{\co}{Co _{\pi}}
\newcommand{\colam}{Co _{\pi _\lambda}}
\newcommand{\colpm}{\co L^p_m}
\newcommand{\colp}{\co L^p }
\newcommand{\lpm}{\ell ^p_m}
\newcommand{\id}{\mathrm{I}}
\begin{document}
\begin{abstract}
We study function spaces that are related to square-integrable,
irreducible,  unitary representations of several low-dimensional  nilpotent Lie groups. These are new examples of
coorbit theory and yield new families of function spaces on $\rd
$. The concrete realization of the representation suggests that these
function spaces are useful for  generalized time-frequency analysis
or phase-space analysis.  
\end{abstract}

\title[Coorbit Spaces for   Nilpotent Groups]{New Function Spaces
  Associated to  Representations of  Nilpotent Lie Groups and
  Generalized Time-Frequency Analysis}
\author{Karlheinz Gr\"ochenig}
\address{Faculty of Mathematics \\
University of Vienna \\
Oskar-Morgenstern-Platz 1 \\
A-1090 Vienna, Austria}
\email{karlheinz.groechenig@univie.ac.at}
\subjclass[2010]{42B35,22E25,46E35}
\date{}
\keywords{nilpotent Lie group, square-integrable representation modulo
center, coorbit space, modulation space, time-frequency analysis,
chirp, frame}
\thanks{K.\ G.\ was
  supported in part by the  project P31887-N32  of the
Austrian Science Fund (FWF)}
\maketitle

\section{Introduction}

The theory of coorbit spaces offers a systematic construction of
Banach spaces attached to a square-integrable representation of a
locally compact group. Roughly speaking, to any square-integrable, irreducible, unitary
representation $\pi $ of a locally compact group on a Hilbert space
$\cH $ one can associate a family
of Banach spaces by imposing a norm on the  representation
coefficients $x\in G \to \langle f, \pi (x) g\rangle $ for fixed
non-zero $g\in \cH$. For instance, the coorbit space $\colp (G)$ for $1\leq p\leq
2$ is   defined  by
the norm  $\|f\|_{\colp} = \Big(\int _G  |\langle f, \pi (x) g\rangle
|^p \, dx \Big)^{1/p} $.
Many important families of function spaces 
in analysis can be represented as coorbit spaces, among them are the
Besov-Triebel-Lizorkin spaces and  the \modsp s on $\rd $ or the
Bergman spaces on the unit ball in $\cd$.
The main theorem of
coorbit space theory provides atomic decompositions, sampling
theorems, and frames for all these spaces\cite{fg88lund,fg89jfa,gro91}.

In this paper we study  coorbit spaces for representations of
low-dimensional nilpotent Lie groups.  Our emphasis is on the concrete
realizations of these spaces (rather than abstraction) and  on the proof
that these families of  spaces are different from each other. 

Our motivation is twofold: (i) We explore  generalizations of \modsp s
and  \tfa . For this purpose we  propose coorbit spaces with respect
to a square-integrable representation of a nilpotent Lie group modulo
the center, as these function spaces  come automatically with an
underlying  configuration space and
a phase 
space. Phase-space analysis in
quantum mechanics and   \tfa\ in engineering are both  based on the Heisenberg group, and the
associated function spaces are the \modsp s. These arise naturally as
the coorbit spaces of the Schr\"odinger representation  and are now standard  for the
treatment of pseudodifferential operators~\cite{Sjo95,gro06},
the Schr\"odinger equation~~\cite{BGOR07,WH07},  evolution
equations~\cite{CNR15},  uncertainty principles~\cite{gro96} etc.
Except for the  isolated example of the Dynin-Folland group
in~\cite{FRR19} no  other
nilpotent groups have  been considered so far in coorbit theory. 
Yet every square-integrable representation (modulo the center) of a
nilpotent group comes automatically with a phase space and two sets of
variables that play the role of position and  momentum. Therefore
function spaces associated to such a representation 
offer themselves as tools for a generalized phase-space 
analysis.

By contrast, coorbit
space theory with respect to other types of groups have received
considerable attention and are well investigated. The coorbit theory
of certain semidirect products of $\rd 
$ with dilation groups has been intensely studied~\cite{BT96,CFT16,DKS09,fuehr15,FV20,ADMV17}. These
theories should be considered as contemporary 
group-theoretic investigations of  wavelet theory and carry a rather
different flavor and applications. Coorbit spaces with respect to
several semisimple Lie groups can be identified with Bergman spaces on certain
domains~\cite{CGO17,CO19} and coorbit space theory yields the most
general atomic decompositions known so far. 

Nilpotent groups have been used far less frequently. For
instance, the goal of ~\cite{FRR19} is a  formulation of  \tfa\ on the
Heisenberg group itself in place of $\rd $.  In this case the phase
space is  the
Dynin-Folland group and the associated function spaces are coorbit
spaces. There are several abstract  attempts to use a nilpotent Lie group $G$ or its Lie
algebra $\mathfrak{g}$ as a configuration spaces and then construct a
suitable phase space, for instance $G\times \hat{G}$ or $\mathfrak{g}
\times \mathfrak{g}^*$. The goal is then to associate generalized \modsp s to these
objects and  to develop a
pseudodifferential calculus for operators acting on $G$. For a sample of this abstract approach
see~\cite{Bel11,Eng17,FR16,Kis19,MP10,Man19,MR19}. 

Our point of view is  different: we start with an irreducible unitary representation of a
nilpotent Lie group that is square-integrable modulo the center and
automatically obtain a natural phase space.

(ii) Our second motivation comes from a question in ~\cite{FRR19}:
``precisely how can we prove the  distinctness'' [of different
families of function spaces]? Likewise, ~\cite{Man19} asks ``to invest effort \dots in
concreteness'' and ``to compare \modsp s with other function
spaces''. In this regard we offer a modest contribution and show that
the coorbit spaces with respect to three nilpotent Lie groups of
dimensions $5$ and $6$ indeed lead to different families of function
spaces on $\bR ^2$. In addition, we will give a short,  different,
self-contained  proof for the main result of \cite{FRR19}. 

Our  idea is to use the different invariance
properties of coorbit spaces with respect to non-isomorphic
groups. This idea leads to a simple, alternative proof of 
\cite[Thm.~7.6]{FRR19}. The  main technical tool is the precise
understanding of the multiplication operator $f(t) \to e^{-i\pi t^2}
f(t)$ on \modsp s, or equivalently, the Schr\"odinger evolution on
\modsp s. 

The investigation of invariance properties of function spaces  suggests that the coorbit spaces with
respect to two different representations $\pi _1$ and $\pi _2$  on a
Hilbert space $\cH $ are
 equal only when the groups $G_1/\mathrm{ker}\, \pi_1$ and
$G_2/\mathrm{ker} \,  \pi_2$ are isomorphic.  In this paper, we only treat
explicit representations of several low-dimensional
nilpotent Lie groups and defer the abstract question to a subsequent investigation.

The paper is organized as follows. In Section~2 we set up the
definition of abstract coorbit spaces and list their main
properties. As a model, we recall  how the  standard \modsp s
fit into the scheme of coorbit theory. We then explain where  to find the phase space in
every square-integrable irreducible unitary representation of a
nilpotent Lie group.  In Section~3 we walk through a list of concrete examples
of coorbit spaces based on representations of nilpotent Lie groups.
Our  main insight is the discovery of  several low-dimensional nilpotent 
Lie groups that yield a  class of coorbit spaces that are different
from the \modsp s. In Section~4 we collect several facts about atomic
decompositions of coorbit spaces and coherent frames and emphasize  the case of nilpotent groups.

Throughout we will use the notation  $f\asymp g$ to express an
equivalence $C\inv f(x) \leq g(x) \leq Cf(x), x\in X,$ with some
constant $C>0$ independent of the relevant parameters.




\section{ Coorbit Spaces}
\label{general}
Let $G$ be a simply connected nilpotent Lie group with Haar measure
$dx$ and with  center $Z \subseteq G$. Let $\pi : G \to \cU (\cH )$ be an irreducible, unitary \rep\ of
$G$ on a Hilbert space $\cH $ that is square-integrable modulo its center, in brief $\pi \in
SI(G/Z)$.  This means that there
exists a constant $d_\pi $, the formal dimension of $\pi $, such that
\begin{equation}
  \label{eq:2}
  \int _{ G/Z} \langle f_1, \pi (\dot{x})g_1 \rangle \,
  \overline{\langle f_2, \pi (\dot{x}) g_2 \rangle} \, d\dot x =
  d_\pi \inv \langle f_1, f_2 \rangle \overline{\langle g_1,g_2 \rangle}
\end{equation}
for all $f_1, f_2, g_1,g_2 \in \cH $. Since $\pi |_Z = \chi(z)
\mathrm{I}_\cH $ is a multiple of the identity with $\chi \in
\widehat{Z}$, the map  $x \in G \mapsto  \langle f_1, \pi (zx)g_1 \rangle \,
  \overline{\langle f_2, \pi (zx) g_2 \rangle} $ is independent of
  $z\in Z$ and the integrand in \eqref{eq:2} is indeed a function on
  $G/Z$. 

  For fixed non-zero $g\in \cH $ the representation coefficient
  \begin{equation}
    \label{eq:n99}
     V^\pi _gf(x) = \langle f, \pi (x) g\rangle \qquad x\in G
  \end{equation}
is a transform that maps elements  $f\in \cH $ to functions on
$G$. Depending on the group and the application, this mapping occurs
under various names, such as  coherent state transform, generalized wavelet
transform, or \stft . 

To extend the domain of the map $V_g^\pi$, we restrict  $g$ to a space of
test functions. For simplicity, 
we choose the space $\cH ^\infty _\pi $ of  $C^\infty$-vectors  of
$\pi $, where $g\in \cH ^\infty _\pi $  means that $x \in G \mapsto
\pi(x)g$ is $C^\infty$. For $g\in \cH ^\infty _\pi $ the
representation coefficient $V_g^\pi f$ is well-defined for
``distributions'' in $(\cH ^\infty _\pi )^*$. 
Other   choices of test function spaces  are discussed in
\cite{fg88lund,fg89jfa}.

\subsection{Coorbit spaces}

  We first  discuss the class of function and
distribution spaces associated to a \rep\ in $SI(G/Z)$. Fix a non-zero
vector  $g\in \cH ^\infty _\pi$. 
    Let    
$m: G/Z \to (0,\infty) $ be a weight functions of  polynomial growth
on $G/Z$,  and let $1\leq p\leq
\infty $. In the following we always assume that $m$ satisfies the
condition
\begin{equation}
  \label{eq:h1}
v(x) :=   \sup _{y\in G}   \{ \frac{m(xy)}{m(y)}, 
\frac{m(yx)}{m(y)}\} < \infty  \qquad \text{ for all } x \in G/Z \, ,
\end{equation}
and that $v$ grows polynomially on $G/Z$. We say that $m$ is moderate
with respect to $v$ or $v$-moderate on $G/Z$. Note that \eqref{eq:h1}
implies that $m(xy) \leq v(x) m(y)$ and $v(xy)\leq v(x) v(y)$. 

 For $p<\infty $ the coorbit space $\co L^p_m(G/Z)$  is the
completion  of the subspace of the space $\cH ^\infty _\pi $ of $C^\infty$-vectors with respect to the norm
\begin{equation}
  \label{eq:4}
  \|f\|_{\colpm} = \Big(\int _{G/Z} |\langle f, \pi (\dot x) g \rangle |^p
  \, m(x)^p \, d\dot x \Big)^{1/p} = \|V_g^\pi f\|_{L^p_m(G/Z)}  \, .
\end{equation}
For $p=\infty $ one takes a weak closure as suggested in
~\cite{FoG05,BG17a}.
A more general definition starts with a solid function
space $Y$ on $G/Z$ satisfying certain  natural conditions,  and then
one   defines a
norm on functions via pull-back, i.e.,  $\co Y$ is the completion
of $C^\infty$-vectors  in $\cH $ with respect to the norm 
\begin{equation}
  \label{eq:n5}
  \|f\|_{\co Y} = \| V_g^\pi f\|_Y \, .
\end{equation}
 We refer to ~\cite{fg89jfa}
for the precise conditions and details.  In this paper we will use
only weighted $L^p$-spaces.

For nilpotent groups the Hilbert space of $\pi $ can always be realized as $\lrd $, where
$\rd $ is to be interpreted as a homogeneous space $G/M$ for a
polarization $M$. Then $g$ can be taken from the Schwartz class $\cS
(\rd )$ by~\cite[4.1.2]{CG90}. In this realization,  $\colpm (G/Z)$ is the subspace
of tempered distributions $f \in \cS ' (\rd)$ such that the
representation coefficient  $\dot{x} \to \langle f, \pi (\dot
x)g\rangle $ belongs to $L^p_m(G/Z)$. 

We summarize the main properties of a coorbit space from
\cite{fg89jfa}.
\begin{prop} \label{props}
  Assume that $1\leq p\leq \infty $ and that $m$ is $v$-moderate.
  
  (i) \emph{Invariance properties:} The coorbit space $\colpm (G/Z)$,
  $1\leq p \leq \infty $    is a Banach space with the norm 
\eqref{eq:4}. It is invariant with respect to the \rep\ $\pi $,
precisely, if $f\in \colpm (G/Z)$ and $y\in G$, then $\pi (y) f\in \colpm (G/Z) $
and $\|\pi (y) f \|_{\colpm} \leq v(y) \|f\|_{\colpm }$. In
particular, $\pi (y)$ is an isometry on $\co L^p(G/Z)$. \\

(ii)  If $h\in \co L^1_v$,  
 then $\|V_h^\pi f \|_{L^p_m}$ is an
equivalent norm on $\colpm (G/Z) $.

(iii) \emph{Duality:} The dual space of $\colpm (G/Z)$ for $1\leq p < \infty $ is $\co
L^{p'}_{1/m}(G/Z)$ with respect to the duality $\langle f, h\rangle =
\int _{G/Z} V_g^\pi f(\dot{x}) \overline{ V_g^\pi  h(\dot{x}) }\, d\dot{x}$.

(iv) As a Banach space, $\colpm (G/Z)$ is isomorphic to $\ell ^p$.

(v) For $p=2$ we have $\co L^2(G/Z) = \cH $. If $m\geq 1$ on $G/Z$ and
$1\leq p\leq 2$, then $\colpm (G/Z) \subseteq \cH $. 
\end{prop}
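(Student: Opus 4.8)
The plan is to route all five claims through the abstract coorbit machinery of \cite{fg89jfa}, whose two structural inputs I first establish directly from \eqref{eq:2}. From $\pi(y)^*\pi(x)=\pi(y^{-1}x)$ one gets the \emph{covariance} $V_g^\pi(\pi(y)f)(\dot x)=V_g^\pi f(\dot y^{-1}\dot x)$; the central character $\chi$ enters only through a unimodular factor coming from a section $G/Z\to G$, so it is invisible to $|V_g^\pi f|$ and hence to the norm \eqref{eq:4}. Feeding the resolution of the identity encoded in \eqref{eq:2} back into $V_g^\pi f$ gives, for normalized $g$, the \emph{reproducing formula} $V_g^\pi f=d_\pi\,(V_g^\pi f)\nat(V_g^\pi g)$, where $\nat$ is the twisted convolution on $G/Z$ attached to $\chi$ and the chosen section. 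The same computation with two windows yields the \emph{cross}-reproducing formula $V_h^\pi f=d_\pi\,(V_g^\pi f)\nat(V_h^\pi g)$. These identities, together with the weighted Young inequality $\|F\nat K\|_{L^p_m}\le \|F\|_{L^p_m}\,\|K\|_{L^1_v}$ for $\nat$ on $G/Z$, drive everything.

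Claim (i): the completion defining $\colpm(G/Z)$ is a Banach space by construction, and $V_g^\pi$ embeds it isometrically into $L^p_m(G/Z)$, injectivity on $C^\infty$-vectors being a consequence of irreducibility. Invariance is read off from covariance: substituting $\dot x\mapsto\dot y\dot x$ in \eqref{eq:4} and using the $v$-moderateness $m(\dot y\dot u)\le v(\dot y)m(\dot u)$ gives $\|\pi(y)f\|_{\colpm}\le v(\dot y)\|f\|_{\colpm}$, and for $m\equiv1$ left translation preserves Haar measure, so $\pi(y)$ is isometric on $\colp(G/Z)$. Claim (ii): apply the weighted Young inequality to the cross-reproducing formula to get $\|V_h^\pi f\|_{L^p_m}\le d_\pi\,\|V_g^\pi f\|_{L^p_m}\,\|V_h^\pi g\|_{L^1_v}\le C\,\|f\|_{\colpm}$, the factor $\|V_h^\pi g\|_{L^1_v}$ being finite precisely because $h\in\co L^1_v$ (using that $v$ is symmetric under inversion); exchanging $g$ and $h$ yields the reverse bound and hence the norm equivalence.

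Claim (iii): the map $PF:=d_\pi\,F\nat(V_g^\pi g)$ is bounded on both $L^p_m$ and $L^{p'}_{1/m}$ by the same Young inequality, reproduces $V_g^\pi f$, and is self-adjoint for the pairing $\int_{G/Z}F\overline H$ because the kernel $V_g^\pi g$ is self-adjoint under the $\nat$-involution $K^*(\dot x)=\overline{K(\dot x^{-1})}$; since the range of $P$ is exactly $V_g^\pi(\colpm)$, duality of this complemented subspace reduces to the elementary duality $(L^p_m)^*=L^{p'}_{1/m}$. Claim (iv) follows from the atomic decompositions of Section~4: a fine enough sampling of $G/Z$ gives bounded analysis and synthesis maps realizing $\colpm$ as a complemented subspace of the weighted sequence space $\lpm$, which is isometric to $\ell^p$; a classical theorem of Pelczynski then identifies any complemented infinite-dimensional subspace of $\ell^p$ with $\ell^p$ itself. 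Claim (v) splits: for $p=2,\ m\equiv1$ the orthogonality relation \eqref{eq:2} states that $d_\pi^{1/2}V_g^\pi$ is unitary onto its range, whence $\co L^2(G/Z)=\cH$; for $1\le p\le2,\ m\ge1$ one has $V_g^\pi f\in L^p_m\subseteq L^p$, and the reproducing formula together with Young's inequality $L^p\nat L^r\subseteq L^2$, where $1/r=\tfrac32-\tfrac1p\in[\tfrac12,1]$ and $V_g^\pi g\in L^r$ for $g\in\cS(\rd)$, forces $V_g^\pi f\in L^2$, i.e.\ $f\in\cH$.

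The single genuine obstacle, on which (i)--(iii) and (v) all rest, is the analytic input behind the weighted Young inequality for $\nat$ on $G/Z$: one must verify that the reproducing kernel $V_g^\pi g=\langle g,\pi(\cdot)g\rangle$ lies in $L^1_v(G/Z)$ for $g\in\cS(\rd)$, and that the cocycle produced by the central character does not spoil the convolution estimate. This is where the concrete structure of $\pi\in SI(G/Z)$ and the polynomial growth of $v$ must be used---controlling the decay of $\langle g,\pi(\dot x)g\rangle$ across $G/Z$. Once this integrability and the accompanying Young inequality are secured, all remaining steps are the formal coorbit arguments of \cite{fg89jfa}.
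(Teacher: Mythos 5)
Your proposal is correct and follows essentially the same route as the paper, which offers no independent proof here but simply summarizes the abstract machinery of \cite{fg89jfa} (covariance, the twisted-convolution reproducing formula, the weighted Young inequality, the projection $PF=d_\pi F\nat V_g^\pi g$ for duality, and the atomic-decomposition plus Pe\l czy\'nski argument of \cite{fg89mh} for claim (iv)), adapted to the modulo-center setting exactly as you do, via the projective/central-character bookkeeping the paper attributes to \cite{fg88lund,Chr96}. The one analytic input you correctly flag as the genuine obstacle---that $V_g^\pi g\in L^1_v(G/Z)$ for $g\in\cS(\rd)$---is precisely what the paper discharges by citing \cite{CG90}: for nilpotent Lie groups, square-integrability modulo the center automatically implies integrability, and Schwartz vectors yield rapidly decaying matrix coefficients on $G/Z$, so the weighted Young inequality you rely on is available for every polynomially growing $v$.
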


The results in~\cite{fg89jfa} are formulated for a square-integrable
representation of a locally compact group $G$.  For a simply connected nilpotent Lie group we
have to consider  square-integrability modulo the center and thus
subsequently we work with  $G/Z$ instead of $G$. Alternatively, one
could work with projective representations of $G/Z$ as mentioned
in~\cite{fg88lund,Chr96}. This is a matter of convenience and taste, and  we prefer to
work with representation of a given group $G$. 

\subsection{Modulation spaces} Most classical function spaces, such as
Sobolev spaces and Besov spaces on $\rd $ can be interpreted as
coorbit spaces with respect to a solvable group of affine
transformations. The most important coorbit spaces attached to a
nilpotent group are the \modsp s originally introduced by H.\
Feichtinger~\cite{fei83}.

We briefly recall their definition. In the following we  write
\begin{equation}
  \label{eq:n4}
  T_xf(t) = f(t-x) \qquad \text{ and } \qquad  M_\xi f(t) = 
 e^{2\pi i \xi \cdot t} \qquad x,\xi ,t\in \rd  \, ,
\end{equation}
 for the operators of translation and  modulation. 
The \stft\ of a
function $f$ with respect to a fixed non-zero window $g\in \cS (\rd )$ is  given
by 
\begin{equation}
  \label{eq:h2}
S_gf(x,\xi ) = \langle f, M_\xi T_x  g \rangle = \intrd f(t)
\overline{g(t-x)} e^{-2\pi i t\cdot \xi  } \, dt \, .  
\end{equation}
In signal processing  the variable $x$ is interpreted as ``time'' and
$\xi $ as ``frequency'', and  the pair $(x,\xi)$ is a point in
time-frequency space $\rdd $.   In the language of
physics, in particular in quantum mechanics,  $x$ corresponds to  the
position and $\xi $ to the
momentum, and $(x,\xi )$ is a point in phase space $\rdd $. The \stft\ is an  important tool to study the
simultaneous \tf\ distribution (or phase-space distribution)  of a
signal $f$. 

The \modsp s are defined by imposing a norm on the \stft\
$S_gf$. Every solid  norm 
$\| \cdot \|_Y$  for functions on phase space $\rdd $ induces a norm
on functions or distributions on $\rd $ by the rule
$$
\|f\|_{M(Y)} = \|S_gf\|_Y \, .
$$
The resulting space $M(Y)$ is the \modsp\ corresponding to $Y$. The
standard choice for $Y$ is a weighted $L^p$-space. Let $m\geq 0$ be a
weight function on $\rdd $, then the \modsp\ $M^p_m(\rd )$ is defined by the
norm
\begin{equation}
  \label{eq:n4b}
\|f\|_{M^p_{m}} = \Big( \intrdd |S _gf(x,\xi)|^{p} m(x,\xi)^p \, dxd\xi
\Big)^{1/p} \, .
\end{equation}
The needs of \tfa\ often require mixed $L^p$-spaces, so one often
looks at the \modsp\ $\Mmpq (\rd )$ defined by the norm
$$
\|f\|_{\Mmpq} = \Big( \Big(\intrd |S _gf(x,\xi)|^{p} m(x,\xi)^p \, dx
\Big)^{q/p} d\xi
\Big)^{1/p} \, .
$$
Thus the $M^p_m$ and $\Mmpq$-norms  quantify the phase-space content of $f$. 

The theory of  \modsp s  forms a branch of \tfa\ 
and hardly needs any further advertisement in this context. Modulation spaces  have
become indispensible for \tfa\ and phase-space analysis and  for the study of pseudodifferential
operators.     The
original paper is  Feichtinger's~\cite{fei83}, for the
history see~\cite{feiSTSIP}, a 
detailed exposition  is contained in~\cite{book} and the forthcoming
books~\cite{BO21,CR21}.  

In our context \modsp s serve as the prototype of function spaces
associated to a representation of a nilpotent Lie group. 

\subsection{Chirps} Our main tool for distinguishing various coorbit
spaces is the behavior of a certain multiplication operator on these
spaces. Let $C = C^T$ be a real-valued symmetric $d\times d$-matrix
and
\begin{equation}
  \label{eq:n9}
  \cN _C f(t) = e^{-i\pi Ct\cdot t} f(t)
\end{equation}
be the corresponding multiplication operator. In engineering
terminology the multiplier $e^{-i\pi Ct\cdot t}$ is called a
``chirp'', in quantum mechanics and PDE $\cN _C$ is an ingredient in
the solution formula for the free Schr\"odinger equation. Given $C$,
we define $D= (4\id + C^2)\inv $ and the $2d\times2d$-matrix $ \Delta =
\Big(\begin{smallmatrix}
  2D & - DC \\ -DC & \emph{I}-2D 
\end{smallmatrix} \Big)$ acting on $z\in \bR ^{2d}$.

We will need
and use a precise estimate for action of $\cN _C$ on \modsp s.

\begin{prop} \label{chirp}
  Let $\phi (t) = e^{-\pi t\cdot t} $ be the standard Gaussian on $\rd
  $ and   $1\leq p\leq 2$.  Then   the modulus of the \stft\ is
\begin{equation}
  \label{eq:app2}
  |S_\phi (\cN _C\phi )(x,\xi )| = \det (4\id + C^2)^{-1/4} e^{-\pi
    \Delta (\xi, x )^T\cdot (\xi , x )^T} \,  ,
\end{equation}
and its $p$-norm is therefore
\begin{align}
  \|\cN _C\phi \|_{M^p(\rd )} = \det (4\id + C^2)^{\frac{1}{2p}-\frac{1}{4}}
  \, .  \label{eq:app3}
\end{align}
\end{prop}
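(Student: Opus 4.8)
The statement asks for an explicit computation of the STFT of the chirped Gaussian $\cN_C\phi$ against the Gaussian window $\phi$, and the resulting $M^p$-norm. The plan is to reduce everything to a Gaussian integral, since $\cN_C\phi(t) = e^{-i\pi Ct\cdot t}e^{-\pi t\cdot t} = e^{-\pi(\id + iC)t\cdot t}$ is itself a generalized Gaussian with complex symmetric ``covariance'' matrix $A = \id + iC$. First I would write out the defining integral from \eqref{eq:h2}, namely
\begin{equation*}
S_\phi(\cN_C\phi)(x,\xi) = \intrd e^{-\pi(\id+iC)t\cdot t}\, e^{-\pi(t-x)\cdot(t-x)}\, e^{-2\pi i t\cdot\xi}\, dt \, ,
\end{equation*}
and collect the exponent as a quadratic-plus-linear form in $t$. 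The quadratic part has matrix $2\id + iC$, so the idea is to complete the square: shift $t$ by the (complex) vector that kills the linear term, and evaluate the remaining pure Gaussian integral using $\intrd e^{-\pi A t\cdot t}\,dt = \det(A)^{-1/2}$ for a symmetric matrix $A$ with positive-definite real part.

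The key bookkeeping step is to separate the real and imaginary parts of the completed-square exponent, because \eqref{eq:app2} only records the \emph{modulus} of the STFT. After completing the square the residual exponent, evaluated at the shift, is a quadratic form in the phase-space variable $(x,\xi)$ whose coefficient matrix is built from $(2\id+iC)\inv$; its real part is exactly what should assemble into $-\pi\,\Delta(\xi,x)^T\cdot(\xi,x)^T$. I would verify the algebraic identity $\mathrm{Re}\big((2\id+iC)\inv\big)$-type blocks reproduce the block matrix $\Delta$ with $D = (4\id+C^2)\inv$; note that $(2\id+iC)(2\id-iC) = 4\id+C^2$, so $(2\id+iC)\inv = (2\id - iC)(4\id+C^2)\inv = (2\id-iC)D$, which makes $D$, $DC$, and $\id - 2D$ appear naturally. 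The prefactor comes from $|\det(2\id+iC)|^{-1/2} = |\det(4\id+C^2)|^{-1/4} = \det(4\id+C^2)^{-1/4}$, matching \eqref{eq:app2}.

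For \eqref{eq:app3} the plan is to integrate $|S_\phi(\cN_C\phi)|^p$ over $\rdd$. Since the modulus is a pure Gaussian $\det(4\id+C^2)^{-1/4} e^{-\pi\Delta z\cdot z}$ in $z=(\xi,x)^T$, raising to the $p$-th power multiplies the exponent by $p$ and the prefactor by $p$; the remaining $2d$-dimensional Gaussian integral over $z$ contributes $\det(p\Delta)^{-1/2} = p^{-d}\det(\Delta)^{-1/2}$. The determinant $\det\Delta$ must then be shown to equal $\det(4\id+C^2)^{-1}$ (up to the factors that cancel), so that collecting the exponents of $\det(4\id+C^2)$ yields the clean result $\det(4\id+C^2)^{\frac{1}{2p}-\frac14}$; the $p$-dependent numerical constants should cancel because the $M^2$-norm must equal $\|\phi\|_2$-type normalization, giving a useful consistency check at $p=2$ where the exponent vanishes. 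I expect the main obstacle to be the determinant identity for $\Delta$: computing $\det\Delta$ for the $2\times 2$ block form requires the Schur-complement formula and careful use of the commutativity of $D$ and $C$ (both are functions of $C$, hence commute), so that blocks can be manipulated as scalars in the matrix algebra generated by $C$. Once $\det\Delta = \det(4\id+C^2)^{-1}$ is established, the rest is routine exponent arithmetic.
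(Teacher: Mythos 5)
Your proposal is correct and follows the same overall strategy as the paper's appendix proof: write the STFT as a single Gaussian integral with quadratic form $2\id+iC$, evaluate it by the formula $\intrd e^{-\pi At\cdot t}\,dt=(\det A)^{-1/2}$ (your completion of the square with a complex shift is the same analytic-continuation step as the paper's application of the Fourier transform of $\phi_A$ at the complex argument $\xi+ix$), extract the real part of the exponent via $(2\id+iC)\inv=(2\id-iC)D$ to assemble $\Delta$, and then compute the $L^p$-norm of the resulting Gaussian. The one genuine divergence is the key determinant identity $\det\Delta=\det(4\id+C^2)\inv$: the paper proves it through the block factorization $\Delta\bigl(\begin{smallmatrix}0&C\\ \id&-2\id\end{smallmatrix}\bigr)=\bigl(\begin{smallmatrix}DC&0\\ \id-2D&-\id\end{smallmatrix}\bigr)$, which requires $C$ invertible and a continuity argument to cover general $C$, whereas your Schur-complement route works directly for every symmetric $C$ because $2D$ is always invertible and all blocks commute: $\det\Delta=\det(2D)\,\det\bigl((\id-2D)-\tfrac12 DC^2\bigr)=\det(2D)\,\det(\tfrac12\id)=\det D$, using $\id-2D=(2\id+C^2)D$. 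This buys a slightly cleaner argument with no degeneracy issue. Two small corrections to your bookkeeping: raising the modulus to the $p$-th power raises the prefactor to the power $p$ (it does not multiply it by $p$), and the $p$-dependent constants do not in fact cancel --- your own correct count $\det(p\Delta)^{-1/2}=p^{-d}(\det\Delta)^{-1/2}$ for a $2d\times 2d$ matrix yields $\|S_\phi(\cN_C\phi)\|_{L^p(\rdd)}=p^{-d/p}\det(4\id+C^2)^{\frac{1}{2p}-\frac14}$, so \eqref{eq:app3} holds only up to the constant $p^{-d/p}$; at $p=2$ this constant is $2^{-d/2}=\|\phi\|_2^2$, consistent with the orthogonality relations, not $1$. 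This is harmless, since the paper uses the proposition only through $\asymp$ in \eqref{eq:m1} and \eqref{eq:m2}; note incidentally that the appendix's stated constant $(p^{-\frac{1}{2p}})^{4d}$ over $\bR^{4d}$ contains a typo of precisely this kind (it should involve $2d$), which your dimension count gets right.
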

For  $d=1$,  $C=u$, and $\cN _u = e^{-\pi i ut^2}$ we obtain
\begin{equation}
  \label{eq:app0}
  |S_\phi (\cN _u \phi ) (x,\xi )| = (4 + u^2 ) ^{-1/4} \, e^{-\pi
    |x|^2/2} \,  e^{-2\pi (\xi + ux/2)^2/(4+u^2)} 
\end{equation}
\begin{equation}
  \label{eq:m1}
\|S_\phi (\cN _u \phi )\|_{L^p(\bR ^{2})} = c_p
(4+u^2)^{\frac{1}{2p}-\frac{1}{4}} \asymp |u|^{\frac{1}{p}-\frac{1}{2}} \, .
\end{equation}
 For $d=2$, $uC=\Big(
\begin{smallmatrix}
  0 & u/2 \\ u/2 &0
\end{smallmatrix}\Big)$, and  $\cN _{uC} = e^{-\pi i uCt\cdot t}$ we
have we have  $(uC)^2 =\tfrac{u^2}{4} \id $  and therefore
$\det (4\id + u^2/4 \id ) = (4+u^2/4)^2$ and thus
\begin{equation}
  \label{eq:m2}
\|S_\phi (\cN _{uC}\phi )\|_{L^p(\bR ^4)} =
(4+u^2/4)^{\frac{1}{p}-\frac{1}{2}} \asymp |u|^{\frac{2}{p}-1 }\, .  
\end{equation}

The proof of Proposition~\ref{chirp} consists of the manipulation of
Gaussian integrals as in~\cite{folland89}. In the context of \modsp s the
operator 
norm of $\cN _C$ has been calculated in~\cite[Lemma~5.3]{CN08a},
among others. For convenience we offer a short proof in our notation
in the appendix.


\subsection{Where is phase space?} \label{psa} The construction of coorbit spaces
works for arbitrary integrable, irreducible, unitary representions of
a  locally compact group. To understand why the representations of a 
nilpotent Lie group yield a form of \tfa\ or phase-space analysis, we
need to look at the general form of the irreducible unitary
representations of nilpotent Lie groups, e.g., in ~\cite{CG90}.

By Kirillov's theory every irreducible representation of a (simply
connected, connected) nilpotent Lie group is induced from a character
of a particular subgroup $M$ of $G$. Fix a functional $\ell \in
\mathfrak{g}^*$ and a maximal Lie subalgebra $\mathfrak{m} \subseteq \mathfrak{g}$, a
so-called polarization, such
that $\ell ([X,Y]) = 0 $ for all $X,Y\in \mathfrak{m}$. Then $\ell $
defines a character $\chi $ on the subgroup $M = \exp ( \mathfrak{m})$
by $\chi (m) = e^{2\pi i \langle \ell , \log m \rangle }$ for $m\in
M$.    The representation $\pi = \pi _\ell $ is the representation
induced from $(\chi , M)$ to $G$. Every irreducible unitary
representation of $G$ can be obtained in this way from some $\ell \in \mathfrak{g}^*$. 

 This induced representation can be
realized explicitly on the representation  space 
$L^2(M\backslash G)$ with a $G$-invariant measure on $M\backslash G$. If  $p :G \to
M\backslash G:=H$ denotes the projection  and $\sigma :M\backslash G \to
G$  a (continuous) section, then every element in $G$ can be written
uniquely as  
$g = mh $ 
with $ m = g \sigma (q(g))\inv \in M$ and $h=\sigma (g) \in H$. The
induced representation is then
\begin{equation}
  \label{eq:n8}
(\pi(g)f) (q(h')) = e^{2  \pi i \langle l, \log\big(h' m h'^{-1} (h'
h\sigma (h'h)\inv) \big) \rangle} \, f\bigl(q(h'\hspace{2pt} h)\bigr) \, .  
\end{equation}                
If $\pi $ is square-integrable modulo the center, then the radical of
$\ell $ coincides with the center~\cite{CG90} and  
$\mathrm{dim}\, \mathfrak{g}/\mathfrak{z}=  2d$ is even.  Let $r=
\mathrm{dim}\, \mathfrak{z}$  and $\mathrm{dim}\,
\mathfrak{m}= r + d$, so that $\mathrm{dim}\,
\mathfrak{g}/\mathfrak{m}=  d$.   Now choose a (strong) Malcev basis $\{Z_1, \dots , Z_r, Y_1, \dots , Y_d, X_1, \dots , X_d
\}$ of
$\mathfrak{g}$ passing through $\mathfrak{z}$ and $\mathfrak{m}$,
 such that $\mathfrak{m} = $ $ \mathrm{span} \, \{Z_1, \dots , Z_r, Y_1, \dots ,
Y_d\} $. Then every $m\in M$ can be written as $m= e^{z_1Z_1} \dots
e^{z_rZ_r} e^{y_1Y_1} \dots   e^{y_dY_d}$, and a distinguished
section $\sigma : M\backslash G \to G$ is $\sigma (Mh) = e^{t_1X_1} \dots
e^{t_d X_d }$. With this choice of coordinates, the multiplicative
term in \eqref{eq:n8} is $e^{2  \pi i \langle l, \log\bigl(h' m
  h'^{-1} \bigr) \rangle}  = e^{2\pi i P(y,t)}$ with $P$ a polynomial
in the coordinates $y$ of $m$ and $t$ of $h'\in M\backslash G$. Thus this part of
$\pi $ can be interpreted as a generalized modulation, with the
polynomial expression $e^{2\pi i
  P(y,t)}$ replacing the linear expression $M_\xi = e^{2\pi i \xi
  \cdot t}$. The  group action $Mh' \mapsto
Mh'h$ is a  generalized translation. The representation is therefore
roughly of the form
\begin{equation}
  \label{eq:1}
g(t) \to  \pi (x,\xi
) g(t) = e^{2\pi i   P(t,\xi)} g(t  x)   \, ,
\end{equation}
 where $P$ is a polynomial in
$t,\xi$ and $t \to t x$ is a group action. Thus $\pi $ splits into a
generalized modulation and a generalized translation. 
In this sense the induced
representation can be viewed as a generalized \tfs\ on the
configuration  space
$M\backslash G$. 

Usually one cannot neglect the factor $e^{2  \pi i \langle l, \log\big( h'
h\sigma (h'h)\inv \big) \rangle} $ depending only on $H=M\backslash
G$. In our concrete examples it disappears. Precisely, whenever $G$
splits as a semidirect product $G = M \ltimes H$ for a subgroup $H$,
then $h'
h\sigma (h'h) = e \in M$ and this factor is absent.

Omitting the central
coordinates $z_j$, we interpret  the variables $y = (y_1, \dots ,
y_d)\in \rd $  parametrizing $m\in M$ as ``frequency/momentum'' and the variables
$(x_1, \dots, x_d) \in \rd $ parametrizing $h= e^{x_1X_1} \dots
e^{x_dX_d} \in M\backslash G$  as
``time/position''.   

In this  analogy  the associated representation coefficient
$$
V^\pi _g f(\dot x) = \langle f,\pi (\dot x) g\rangle \, 
$$
is a version of the \stft\ measuring a new kind of phase space
concentration on $M\backslash G$. 

\section{Concrete examples} \label{examples}
We give some concrete  examples of coorbit spaces with respect to
nilpotent groups, some already known, some new. 

For the low dimensional examples we use the classification of
Nielsen~\cite{Niel83}. This source lists all nilpotent Lie groups of
dimension $\leq 6$ with their Lie algebras, the explicit group
multiplications, descriptions of the coadjoint orbits, and the
associated irreducible representations. By using the available
formulas, we can write the  representations and coorbit spaces without
requiring   Kirillov's theory. 

\subsection{The Heisenberg group and \modsp s }

Let $\bH _d= \bR ^{d} \times \rd  \times \bR $ with multiplication $(x,y,z)\cdot (u,v,w)
= (x+u,y+v, z+w + x\cdot v)$ and the representation $\pi _\lambda ,
\lambda \neq 0$, acting on $L^2(\bR ^d)$ by the operators 
\begin{equation}
  \label{eq:schr}
\pi _\lambda (x,y,z)f(t) =
e^{2\pi i \lambda z} e^{-2\pi i \lambda y\cdot t} f(t-x)   
\end{equation}
for $f\in \lrd $.
This
representation, the Schr\"odinger representation,  is square-integrable modulo the center $\{0\}\times
\{0\} \times \bR $.

 Omitting the center (or  setting $z=0$) and comparing to \eqref{eq:h2},  we see that the
representation coefficients of $\pi _\lambda $ are simply a scaled
version of the 
short-time \ft : 
$$
V_g^{\pi _\lambda} f(x,y,0) = \langle f, M_{-\lambda y} T_x g \rangle =
S_gf(x,-\lambda y) \, .
$$
Since by definition the coorbit space norm on $\colpm $ is 
\begin{align*}
\|f\|_{\colpm } ^p &= \intrdd |V^{\pi_\lambda} _gf(x,y)|^{p}
                     m(x,y)^{p} \, dxdy \\
  &=  \intrdd |S _gf(x,-\lambda y)|^{p} m(x,y)^{p} \, dxdy =  \lambda ^{-p}
\,  \|f\|_{M^p_{m_\lambda}}^p 
  \end{align*}
  with the weight $m_\lambda (x,y) =  m(x, -\lambda\inv y)$. 
The class of  coorbit spaces of the Heisenberg group is therefore
identical to the class   of  the  \modsp s.
This group-theoretic point of view on \modsp s is 
explained in detail in~\cite{fg92chui}.

\subsection{The group $G_{6,16}$}
This group is the six-dimensional group $G_{6,16} \simeq \bR ^6$
defined by the Lie brackets
$$
[X_6,X_5]=X_2, \quad [X_6,X_4]=X_1, \quad [X_5,X_3]=X_1 
$$
with group multiplication
\begin{align*}
x\cdot y &= (x_1, \dots , x_6)\cdot (y_1, \dots , y_6)\\ & = (x_1+y_1 +
x_5y_3 + x_6y_4, x_2+y_2+x_6y_5, x_3+y_3, x_4+ y_4, x_5+ y_5,  x_6+
y_6) \, .
\end{align*}
It possesses the two-parameter family of 
representations modulo the center $\bR ^2 \times \{ (0,0,0,0)\}$
acting on $L^2(\bR ^2)$ by the operators
$$
\pi _{\lambda ,\mu } (x_1, \dots , x_6) g(s,t) = \exp  2\pi i \Big(
\lambda (x_1-x_3 s -x_4 t)   +\mu (x_2-x_5x_6 +x_6s) \Big)
g(s-x_5,t-x_6) \, .
$$
For $\lambda \neq 0, \mu \in \bR $ the representation
$\pi_{\lambda,\mu}$ is square-integrable modulo the center. 
When using the absolute values $|V^{\pi _{\lambda ,\mu }} _gf|$, the
phase factor $e^{2\pi i (\lambda x_1 +\mu (x_2-x_5x_6))}$ 
disappears, and we
will therefore omit it in the definition of the transform  $V^\pi _g$.
In addition, we identify the quotient $G_{6,16}/Z$ with a convenient  section
$G_{6,16}/Z \to G_{6,16}$ and will write $\dot x = (0,0,x_3, \dots , x_6) \in
G_{6,16}/Z$. Then  we
can write the representation  with the help of  \tfs s as follows:
\begin{equation}
  \label{eq:l8}
  \pi _{\lambda , \mu }(\dot x) = M_{(-\lambda x_3 + \mu x_6, -\lambda
    x_4)} T_{(x_5,x_6)} \, .
\end{equation}
So the cleaned-up \rep\ coefficient is just a scaled version of the
\stft\
$$
V_g^{\pi _{\lambda , \mu }}(\dot x) = S_gf\big((x_5,x_6), (-\lambda x_3 + \mu x_6, -\lambda
x_4)\big) \, .
$$
The resulting coorbit space norm $\colpm $ is then
$$
\| f \|_{\colpm} ^p = \int _{\bR ^4} |V^{\pi _{\lambda ,\mu }}_g f
  (\dot x)| ^p \, m(\dot x )^{p} \, d\dot x  \, .
  $$
  Using the linear coordinate transform $T\dot{x}=y$,  $y_3 = -\lambda x_3 + \mu x_6,
  y_4 = -\lambda     x_4, y_5 = x_5, y_6=x_6$, we find that
$$\|f\|_{\colpm (G/Z) } ^p = \int _{G/Z} |S_gf(T\dot{x})|^p m(\dot{x})^p
\, d\dot x = c \int _{\bR ^4} |S_gf(y)|^p m(T\inv y) \, dy  \, . 
$$
Therefore $Co _{\pi_{\lambda,\mu}} L^p(G_{6,16}/Z) = M^p_{m\circ T\inv }(\bR ^2)$ is just a \modsp\
with a modified weight that  depends  on the parameters $\lambda ,\mu $ of
the representation $\pi_{\lambda,\mu}$. Thus the coorbit spaces
associated to a representation $\pi_{\lambda,\mu }$  of the group
$G_{6,16}$ all belong to the class of \modsp s, and  no new spaces
arise from this group. 

For the special case 
of a polynomial weight $m(\dot x) = (1+|\dot x|)^s, s\in \bR $, we
have  $m\circ T\inv  \asymp m$, and  we see that
  \eqref{eq:l8} is just an equivalent norm for the \modsp\ $M^p_m (\bR
  ^2)$ independent of the parameters of the representation $\pi
  _{\lambda,\mu}$.  

  In conclusion,  the group $G_{6,16}$ does not yield any new coorbit
  spaces, nor a new version of \tfa .  This seems intuitive, because
  the quotient $G_{6,16}/Z $ is the abelian group $\bR   ^4$, as is
  the quotient $\bH _2 / Z $.

  Likewise the groups studied in~\cite{Kis19,Eng17}
  for the construction of general coherent states lead to the standard
  \modsp s (because $G/Z \simeq \rdd $).

\subsection{The group $G_{5,3}$}
This is the (simply connected) nilpotent Lie   group generated by the
Lie algebra with the  Lie brackets
\begin{equation}
  \label{eq:l1}
  [X_5,X_4]=X_2 , \quad   [X_5,X_2]=X_1 , \quad   [X_4,X_3]=X_1  \, .
\end{equation}
For $x= (x_1,x_2, \dots , x_5), y= (y_1, \dots, y_5) \in \bR ^5$ the  group  multiplication
of $G_{5,3}$ is given by
\begin{equation}
  \label{eq:l2}
  x\cdot y = (x_1+y_1+x_4y_3+x_5y_2+\tfrac{1}{2} x_5^2 y_4,
  x_2+y_2+x_5y_4, x_3+y_3, x_4+y_4, x_5+y_5) \, .
\end{equation}
The center of $G_{5,3}$ is $\bR \times \{(0,0,0,0)\}$. The analysis of
all irreducible unitary representations yields a one-parameter family
of representations square-integrable modulo the center $\pi _\lambda
\in SI (G/Z), \lambda \neq 0 $.  For $x\in G_{5,3}$ and $(s,t) \in \bR ^2$,  $\pi _\lambda
$   acts on $g\in L^2(\bR ^2)$ as
follows:
\begin{equation}
  \label{eq:l3}
  \pi _\lambda (x)g(s,t) =  \exp \Big( 2\pi i \lambda (x_1 - x_3x_4 +
  x_4 s -x_2t +\tfrac{1}{2} x_4 t^2) \Big) \, g(s-x_3, t-x_5) \, .
\end{equation}
Let us briefly analyze this representation from the perspective of
\tfa . The operators $\pi _\lambda (x)$ act on $g$ by the translations
$T_{(x_3,x_5)}$ and the modulations $M_{(\lambda x_4,-\lambda x_2)}$. The additional
factor that makes this representation interesting and different from
the Schr\"odinger representation \eqref{eq:schr} is the multiplication by the chirp
$e^{\pi i \lambda  x_4 t^2}$. 
We see that $\pi _\lambda $ is indeed of the form $e^{2\pi i P(x;s,t)}
T_u$ as motivated in \eqref{eq:1} for some polynomial $P$ and some
translation by $u=(x_3,x_5)$.  

The polarization used for $\pi _\lambda $ is  $\mathfrak{m} = \bR -
\mathrm{span}\, \{ X_1,X_2,X_4\}$. As explained in Section~\ref{psa}, 
 the underlying phase space consists of the
``frequency'' variables $x_2,x_4$ and the ``time'' variables
$x_3,x_5$, and   
 the pairs $(x_2,x_5)$ and
$(x_3,x_4)$ are  conjugate variables, since $[X_5,X_2], [X_3,X_4] \in
\mathfrak{z}$. 

Omitting the phase factor $e^{2\pi i \lambda
  (x_1-x_3x_4)}$, which disappears  in  absolute values of $V_g^{\pi
  _\lambda }f$,  and choosing the section $\dot x = (0,x_2,
x_3,x_4,x_5)$, the
associated representation coefficient with respect to a fixed $g\in
L^2(\bR ^2)$ is given by
\begin{equation}
  \label{eq:l4}
  V_g^{\pi _\lambda }f(\dot x) = \int _{\bR ^2} f(s,t) \,
  \bar g(s-x_3, t-x_5)  \exp \Big( -2\pi i \lambda (x_4 s -x_2t
  +\tfrac{1}{2} x_4 t^2) \Big) \, dsdt \, .
\end{equation}
This formula certainly justifies the interpretation of   $ V_g^{\pi _\lambda }g(x_2,x_3,
x_4,x_5)$ as a generalized \stft . 


Let $m$ be a moderate,  polynomially growing weight on $\bR ^4$,
$1\leq p\leq \infty$,  and fix 
$g\in \cS (\bR ^2), g\neq 0$. The family of coorbit spaces
with respect to $\pi _\lambda $ is given by
\begin{equation}
  \label{eq:l5}
  \colam  L^p_m(G_{5,3}/Z) = \{ f\in \cS ' (\bR ^2): V_g^{\pi
    _\lambda } f \in L^p_m(G_{5,3}/Z) \} 
\end{equation}
with norm
\begin{equation}
  \label{eq:l6}
  \|f\|_{\colam  L^p_m}^p = \int _{\bR ^4} |V_g^{\pi
    _\lambda } f(x_2,x_3, x_4,x_5)|^p \, m(x_2,x_3, x_4,x_5)^p \,
  dx_2dx_3dx_4dx_5 \, .
\end{equation}
In analogy to the mixed \modsp s $M^{p,q}_m(\rd )$  one might also
consider  the spaces with norm
$$
  \|f\|_{\colam  L^{p,q}_m}^p = \int _{\bR ^2} \Big(\int _{\bR ^2} |V_g^{\pi
    _\lambda } f(x_2,x_3, x_4,x_5)|^p \, m(x_2,x_3, x_4,x_5)^p \,
  dx_3dx_5 \Big)^{q/p} \, dx_2 dx_4  \, .
$$
Thus  the associated
coorbit spaces bear some resemblance to the  \modsp s and can be rightly
considered generalized \modsp s. 
This  class of function spaces should be  well
suited for all questions concerning the operators $\pi _\lambda $. 

Notice that
$$
|V_g ^{\pi _\lambda } f(x_2, x_3, x_4,x_5)| = |V_g^{\pi _1}f(\lambda
x_2, x_3, \lambda x_4, x_5)| \, ,
$$
so that   $V_g^{\pi _\lambda }f \in L^p_m(G_{5,3}/Z)$ \fif\ $V_g^{\pi _1 }f
\in L^p_{m_\lambda}(G_{5,3}/Z)$ with $m_\lambda (x_2, x_3, x_4,x_5) =
m(\lambda\inv x_2, x_3, \lambda \inv x_4,x_5)$. Thus different
parameters $\lambda $ yield the same family of spaces, possibly with a
change of the weight. 
We may therefore assume without loss of generality
that $\lambda =1$ and write $\pi = \pi _1$ in the following.

Our main insight   is  that the coorbit spaces $\colpm
(G_{5,3}/Z)$  form a new class of function spaces on $\bR ^2$ and
differ from the standard \modsp s.

\begin{prop} \label{neqs}
  Let $p,q\in [1,\infty ], p\neq 2$. 
  Then
  $$
  \co L^p(G_{5,3}) \neq M^q  (\bR ^2) \, .
  $$
\end{prop}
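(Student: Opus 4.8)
The plan is to isolate the one structural feature that separates $\pi$ from the Schr\"odinger representation, namely the chirp factor $e^{\pi i u t^2}$ built into \eqref{eq:l3}, and to play the representation-invariance of $\co L^p(G_{5,3})$ (Proposition~\ref{props}(i)) off against the failure of chirp-invariance of the modulation spaces quantified in Proposition~\ref{chirp}. I argue by contradiction: suppose $\co L^p(G_{5,3})=M^q(\bR^2)$ as sets. By standard functional analysis the two norms are then equivalent, say $A\|f\|_{M^q}\le\|f\|_{\co L^p}\le B\|f\|_{M^q}$, and it is this quantitative equivalence that I will violate.

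First I would read the chirp off the representation. Setting $x=(0,0,0,u,0)$ in \eqref{eq:l3} gives
\begin{equation*}
\pi(0,0,0,u,0)\,g(s,t)=e^{2\pi i u s}\,e^{\pi i u t^2}\,g(s,t)=M_{(u,0)}\,\cC_u\,g(s,t),
\end{equation*}
where $\cC_u g(s,t)=e^{\pi i u t^2}g(s,t)$ is the partial chirp in the second variable; in the notation \eqref{eq:n9} this is $\cN_C$ with $C=\mathrm{diag}(0,-u)$, so $\det(4\id+C^2)=4(4+u^2)$. By Proposition~\ref{props}(i) each $\pi(y)$ is an isometry on $\co L^p(G_{5,3})$, hence by the norm equivalence $\sup_y\|\pi(y)\|_{M^q\to M^q}\le B/A$. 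Since modulations are isometries on the unweighted $M^q$, this forces $\|\cC_u\|_{M^q\to M^q}=\|\pi(0,0,0,u,0)\|_{M^q\to M^q}\le B/A$ uniformly in $u$.

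I would then contradict this uniform bound via Proposition~\ref{chirp}. For $1\le q<2$ I test on the Gaussian $\phi$: equation \eqref{eq:app3} with $C=\mathrm{diag}(0,-u)$ yields
\begin{equation*}
\|\cC_u\|_{M^q\to M^q}\ge\frac{\|\cC_u\phi\|_{M^q}}{\|\phi\|_{M^q}}=\frac{[4(4+u^2)]^{\frac{1}{2q}-\frac14}}{\|\phi\|_{M^q}}\asymp|u|^{\frac1q-\frac12}\longrightarrow\infty,
\end{equation*}
the desired contradiction. For $2<q\le\infty$ I pass to the dual $(M^q)^*=M^{q'}$ (Proposition~\ref{props}(iii)): the Banach adjoint of $\cC_u$ is again a chirp of the same modulus $|u|$, so $\|\cC_u\|_{M^q\to M^q}=\|\cC_{\pm u}\|_{M^{q'}\to M^{q'}}$ with $q'<2$, and the previous estimate gives growth $\asymp|u|^{\frac1{q'}-\frac12}\to\infty$ (for $q=\infty$ one reads this as boundedness on $M^\infty=(M^1)^*$ forcing boundedness on $M^1$). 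This settles every $q\neq2$ and, tellingly, uses nothing about $p$ beyond the isometry property.

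The remaining case $q=2$ is where the hypothesis $p\neq2$ finally enters. Here $M^2(\bR^2)=L^2(\bR^2)=\cH$, which by Proposition~\ref{props}(v) equals $\co L^2(G_{5,3})$; thus $\co L^p(G_{5,3})=M^2$ would mean $\co L^p(G_{5,3})=\co L^2(G_{5,3})$. But Proposition~\ref{props}(iv) gives $\co L^p(G_{5,3})\cong\ell^p$ as a Banach space while $L^2\cong\ell^2$, so an equivalence of norms would yield $\ell^p\cong\ell^2$, which fails for $p\neq2$. This completes the argument. The main obstacle is the $q\neq2$ step: one must recognize that the distinguishing chirp sits \emph{inside} the representation and that its $M^q$-operator norm genuinely blows up; the sharp Gaussian computation of Proposition~\ref{chirp} (extended past $q=2$ by duality) is precisely the quantitative input that makes this rigorous.
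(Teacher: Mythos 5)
Your proof is correct and takes essentially the same route as the paper's own (second) proof: you exploit that $\pi(0,0,0,u,0)=M_{(u,0)}\,\cC_u$ acts isometrically on $\co L^p(G_{5,3}/Z)$ by Proposition~\ref{props}(i), while its operator norm on $M^q(\bR ^2)$ blows up like $|u|^{\lvert 1/q-1/2\rvert}$ by the Gaussian chirp computation of Proposition~\ref{chirp} — precisely the invariance argument the paper advertises, with the closed-graph norm equivalence making the contradiction rigorous. The only differences are organizational and harmless: you split cases as $q\neq 2$ (chirp blow-up, valid for every $p$, extended past $q=2$ by the cleaner operator-level adjoint identity $\|\cC_u\|_{M^q\to M^q}=\|\cC_{-u}\|_{M^{q'}\to M^{q'}}$, which also covers $q=\infty$) versus $q=2$ (where $\co L^p\cong \ell ^p\not\cong\ell ^2$ by Proposition~\ref{props}(iv)), whereas the paper first proves $\colp \neq M^p$ (directly for $p<2$, by space-level duality for $p>2$) and then dispatches $q\neq p$ by the same $\ell^p$-versus-$\ell^q$ isomorphism argument.
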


\begin{proof}[First proof]
  Because of the structure of the representation we use tensor
  products $f(s,t) = f_1(s) f_2(t) = (f_1 \otimes f_2) (s,t)$. To
  measure the norm, we choose a Gaussian window $g(s,t) = e^{-\pi s^2}
  e^{-\pi t^2}= \phi \otimes \phi (s,t)$ with the Gaussian $\phi (s) =
  e^{-\pi s^2}$ for $s\in \bR $.

  Let
  $$\cN _uf(t) = e^{-i\pi ut^2} f(t) $$
  be the multiplication
  operator with the ``chirp'' $e^{-i\pi ut^2}$ acting on $L^2(\bR )$,  and $T_x, M_\xi $ be
  the \tfs s  on $L^2(\bR )$. Then the representation
  coefficients of a tensor product can be written as
  \begin{align*}
    \lefteqn{    V_g^\pi (f_1 \otimes f_2)(x_2, x_3, x_4, x_5) \ } \\
    &= \int _{\bR } f_1(s) \phi (s-x_3) e^{-2\pi i x_4s} \, ds 
      \int _{\bR } e^{-i\pi x_4t^2} \, f_2(t) \phi (t-x_5) e^{2\pi i
      x_2t} \, dt \\
    &= S_\phi f_1(x_3,x_4) \, S_\phi (\cN _{x_4} f_2)(x_5,-x_2) \,
      dt \,  .
  \end{align*}
Taking  the $p$-norm first with respect to $x_2,x_5$, we obtain
  \begin{align}
  \| V_g^\pi (f_1 \otimes f_2)\|_{L^p(G/Z)}^p  &= \int _{\bR ^2}
 \|S_\phi (\cN  _{x_4}f_2)\|_{L^p(\bR ^2)}^p |S_\phi f_1(x_3,x_4)|^p \,
        dx_3 dx_4 \notag  \\
    &=  \int _{\bR ^2} \|\cN  _{x_4}f_2\|_{M^p(\bR )}^p |S_\phi f_1(x_3,x_4)|^p \,
      dx_3 dx_4  \, .  \label{eq:h3}
  \end{align}
%

We now set   $v(x_3,x_4) = (1+|x_4|)^{\frac{1}{2p} -
  \frac{1}{4}}$ and  choose $f_2 = \phi $ and $f_1 \in M^p(\bR )
$ arbitrary. Since
\begin{equation}
  \label{eq:fst}
S_{\phi \otimes \phi } (f_1 \otimes
f_2)(x_1,x_2,\xi_1,\xi_2) = S_\phi f_1(x_1,\xi_1) S_\phi
f_2(x_2,\xi_2) \, ,  
\end{equation}
we
see that $f_1 \otimes \phi \in M^p(\bR ^2)$. 
On the other hand,   using Proposition~\ref{chirp} the \modsp\ norm
of  $\cN _{x_4} \phi $ 
is 
\begin{equation}
  \label{eq:a3}
  \|\cN _{x_4} \phi \|_{M^p(\bR )} \asymp  \|S_\phi (\cN _{x_4}\phi) \|_{L^p(\bR ^2)} \asymp
  (4+x_4^2)^{\frac{1}{2p} - \frac{1}{4}} \asymp
  |x_4|^{\frac{1}{p} - \frac{1}{2}}  \, .
\end{equation}
Continuing \eqref{eq:h3}, we find that
$$
\|f_1 \otimes \phi \|_{\colp } ^p \asymp \int _{\bR ^2} (1+x_4^2)^{1/2-p/4}
|S_\phi f_1 (x_3,x_4)|^p \, dx_3dx_4 \asymp \|f_1\|_{M^p_v}^p \, \|\phi
\|_{M^p} ^p \, .
$$
Thus $f_1\otimes \phi \in \colp (G_{5,3}/Z)$, 
\fif\ $f_1 \in
M^p_v(\bR )$. If $1\leq p<2$ and $p'= p/(p-1)<2$, then  $M^p_v$ is a proper subspace of $M^p$, e.g., by
~\cite[Cor.~12.3.5]{book}. Therefore  there exists $f_1 \in M^p \setminus M^p_v$, and
consequently, we have constructed an element 
$f_1 \otimes \phi \in M^p(\bR ^2)$, but $f_1 \otimes \phi  \not \in
\colp $.

So far, we have proved that $\colp \neq M^p$ for $1\leq p<2$. 
For $p>2$ we use the duality. By Proposition~\ref{props}(iii), $\colp  \simeq  (\co L^{p'})^* \neq (M^{p'})^*
\simeq M^p$.   

Finally, we argue  that $\colp \neq M^q$ for $q\neq p$. By~\cite[Prop.~9.3]{fg89mh}
$\colp $ is isomorphic to $\ell ^p$, whereas $M^q $ is  
isomorphic to $\ell ^q$, whence these spaces must be different. 
\end{proof}

%


The above argument does not work for the case $p=2$. This is clear,
because we always have $\co L^2(G/Z) = \cH = L^2(\bR ^2)$ by Proposition~\ref{props}(v). 

Next we  recast the proof  in a different  form
that is more  suitable for generalization. 

\begin{proof}[Second proof]

By  Proposition~\ref{props} the representation $\pi $
is an isometry on $\colp (G_{5,3}/Z)$, therefore  we have
\begin{equation}
  \label{eq:n9b}
\|\pi (0,0,x_4,0)f \|_{\colp } = \|f \|_{\colp } \, .  
\end{equation}
By contrast, since by \eqref{eq:l3}  $$\pi (0,0,x_4,0)(f_1 \otimes f_2)  = M_{x_4}f_1
\otimes  \cN _{x_4}f_2$$ and
the 
modulation is  an isometry on $M^p$,   we
have,  for $x_4$ large enough, 
\begin{align*}
\|\pi (0,0,x_4,0)(f_1\otimes \phi ) \|_{M^p(\bR ^2) } &=  \| f_1 \|_{M^p (\bR )} \, \|\cN
                                       _{x_4}\phi\|_{M^p (\bR )} \\
 & = c_p (4+x_4^2)^{\frac{1}{2p}-\frac{1}{4}} \|f_1\|_{M^p} \|\phi\|_{M^p}\\
&\leq C |x_4|^{\frac{1}{p}-\frac{1}{2}} \|f_1\otimes \phi \|_{M^p} \, .   
\end{align*}
Thus  the one-parameter group of (unitary) operators $\pi (0,0,0,x_4,0)$
does not act by isometries on $M^p(\bR ^2)$ for $1\leq p<2$, but is unbounded on
$M^p(\bR ^2)$.  We conclude that  $\colp
(G_{5,3}/Z) \neq M^p(\bR ^2)$. If $p>2$, we use duality to achieve the
same conclusion. 

In fact, using a standard argument we
construct  an 
element $h\in \colp (G_{5,3}/Z)$ that is not in $M^p(\bR ^2)$. Simply choose an
increasing sequence $u_n >0$, such that
\begin{align*}
\frac{1}{n^2} u_n ^{\,\, \frac{1}{p}-\frac{1}{2}} \to \infty \qquad
  \text{ and }   \qquad 
c_p u_n ^{\,\, \frac{1}{p}-\frac{1}{2}} > 2C n^2 \sum _{j=1}^{n-1} \frac{1}{j^2}   u_j^{\,\,
\frac{1}{p}-\frac{1}{2}}   \qquad \forall n \, .
\end{align*}
Fix $f= f_1 \otimes \phi $. Then by absolute convergence and~\eqref{eq:n9b}
$$
h= \sum _{j=1}^\infty \frac{1}{j^2} \pi (0,0,u_j ,0) f \in \colp
(G_{5,3}/Z) \subseteq L^2(\bR ^2) \, ,
$$
but  the sequence of partial sums of $h$  is unbounded in $M^p(\bR ^2)$:
\begin{align*}
  \|\sum _{j=1}^n \frac{1}{j^2} \pi (0,0,u_j ,0) f \|_{M^p} 
&\geq \frac{1}{n^2} \|\pi (0,0,u_j,0)f \|_{M^p} - \sum
 _{j=1}^{n-1}  \frac{1}{j^2} \|\pi (0,0,u_j,0)f \|_{M^p} \\
&\geq  c_p \frac{1}{n^2} u_n ^{\,\, \frac{1}{p}-\frac{1}{2}}- C \, \sum
 _{j=1}^{n-1}  \frac{1}{j^2} u_j  ^{\,\, \frac{1}{p}-\frac{1}{2}} >\frac{1}{2n^2}
  u_n ^{\,\, \frac{1}{p}-\frac{1}{2}}\to \infty \, .
\end{align*}
Thus  $h \not \in M^p(\bR )$. 
\end{proof}

The above argument does not exclude the possibility that $\colp
(G_{5,3}/Z) = M^p_m(\bR ^2)$ for some moderate weight $m$. An
extension of Proposition~\ref{neqs} yields the  general result.

\begin{tm} \label{different}
  Let $1\leq p\leq \infty, p\neq 2$, and $m$ be an arbitrary  moderate weight
  function on $\bR ^4$.  
  Then
  $$
  \colp (G_{5,3}/Z) \neq M^p_m (\bR ^2) \, .
  $$
\end{tm}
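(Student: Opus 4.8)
The plan is to argue by contradiction: suppose $\colp (G_{5,3}/Z) = M^p_m(\bR ^2)$ with equivalent norms, and squeeze so much information out of this equality that $m$ is pinned down completely, after which an impossible sharp estimate for the chirp $\cN_u$ appears. As in the proofs above I would work only with tensor products $f_1\otimes f_2$ and the Gaussian window $g=\phi\otimes\phi$, so that the representation coefficient factors and, after integrating out $x_2,x_5$,
\begin{equation*}
\|f_1\otimes f_2\|_{\colp}^p = \int_{\bR^2} |S_\phi f_1(x_3,x_4)|^p\,\|\cN_{x_4}f_2\|_{M^p(\bR)}^p\,dx_3\,dx_4 .
\end{equation*}
The decisive structural feature is that on the coorbit side the \emph{strength of the chirp is the frequency variable $x_4$ of the first factor}, whereas on the modulation side $\|f_1\otimes f_2\|_{M^p_m}$ reduces, via the factorization \eqref{eq:fst} of the \stft\ together with the Gaussian localization of one factor, to a one-dimensional weighted modulation norm of the other factor with $m$ frozen on the slice through the concentration point of the localized factor.

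I would then run two families of test functions. First, take $f_2 = M_\beta T_\alpha\phi$ a time-frequency shifted Gaussian; since translations and modulations are isometries on $M^p$ and commute with the chirp up to a time-frequency shift, $\|\cN_{x_4}M_\beta T_\alpha\phi\|_{M^p} = \|\cN_{x_4}\phi\|_{M^p}\asymp(1+|x_4|)^{1/p-1/2}$ by Proposition~\ref{chirp}. Hence $\|f_1\otimes f_2\|_{\colp}\asymp\|f_1\|_{M^p_w}$ with $w(x_3,x_4)=(1+|x_4|)^{1/p-1/2}$ \emph{independent of $(\alpha,\beta)$}, while on the modulation side the same quantity is $\asymp\|f_1\|_{M^p_{\tilde m_{\alpha,\beta}}}$ with $\tilde m_{\alpha,\beta}(y_1,\eta_1)=m(y_1,\alpha,\eta_1,\beta)$. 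Testing the resulting equivalence on shifted Gaussians forces $w\asymp\tilde m_{\alpha,\beta}$ uniformly in $(\alpha,\beta)$, i.e.
\begin{equation*}
m(y_1,y_2,\eta_1,\eta_2)\asymp (1+|\eta_1|)^{1/p-1/2}\qquad\text{for all }(y_1,y_2,\eta_1,\eta_2)\in\bR^4 .
\end{equation*}
Second, take $f_1=M_\delta\phi$ and $f_2$ arbitrary. Gaussian localization gives $\|M_\delta\phi\otimes f_2\|_{\colp}\asymp\|\cN_\delta f_2\|_{M^p}$, while $\|M_\delta\phi\otimes f_2\|_{M^p_m}\asymp\|f_2\|_{M^p_{m'_\delta}}$ with $m'_\delta(y_2,\eta_2)=m(0,y_2,\delta,\eta_2)$; by the weight identity just obtained $m'_\delta\asymp(1+|\delta|)^{1/p-1/2}$ is constant, so equality of the two spaces yields the sharp two-sided estimate
\begin{equation*}
\|\cN_\delta f_2\|_{M^p(\bR)}\asymp (1+|\delta|)^{1/p-1/2}\,\|f_2\|_{M^p(\bR)}\qquad\text{uniformly in }\delta\in\bR\text{ and }f_2\in M^p(\bR).
\end{equation*}

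Finally I would refute this estimate, which is the crux. Testing it on the $\delta$-dependent function $f_2=\cN_{-\delta}\phi$ and using that chirps compose additively, $\cN_\delta\cN_{-\delta}=\id$, the left-hand side equals $\|\phi\|_{M^p}=O(1)$, whereas the right-hand side is $\asymp(1+|\delta|)^{1/p-1/2}\|\cN_{-\delta}\phi\|_{M^p}\asymp(1+|\delta|)^{2(1/p-1/2)}$, which diverges as $|\delta|\to\infty$ when $1\le p<2$ (and tends to $0$, again violating the estimate, when $2<p\le\infty$). This contradiction proves the theorem for $p\neq 2$; alternatively the range $2<p<\infty$ may be reduced to $1\le p<2$ by the duality $(\colp)^*=\co L^{p'}$ and $(M^p_m)^*=M^{p'}_{1/m}$ of Proposition~\ref{props}(iii), since $1/m$ is again moderate. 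I expect the main obstacle to be exactly the arbitrariness of $m$: a single test function never suffices, because a decaying weight such as $(1+|\eta_1|)^{-(1/p-1/2)}$ can cancel the chirp growth for that one function. The two-family design removes this freedom—the first family pins the weight to $(1+|\eta_1|)^{1/p-1/2}$ precisely because the coorbit chirp factor is blind to the phase-space position of the second tensor factor, and only then does the second family expose the impossible sharp chirp estimate. Making the reductions to one-dimensional weighted norms uniform in the shift parameters (using moderateness of $m$ and the elementary boundedness of $\cN_v$ on $M^p$ for bounded $v$) is the one technical point requiring care.
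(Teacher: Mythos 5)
Your proof is correct, but it takes a genuinely different route from the paper's. The paper argues in two short steps: since $\colp (G_{5,3}/Z)\subseteq L^2(\bR ^2)$ by Proposition~\ref{props}(v), only weights with $M^p_m(\bR ^2)\subseteq L^2(\bR ^2)$ need to be considered, and for $1\leq p<2$ this embedding forces $m\geq C_0>0$ (quoting the embedding characterizations in \cite[Thm.~12.2.2]{book}, \cite[Thm.~4]{fg89mh}); then the orbit $M_uf_1\otimes \cN _u\phi = \pi (0,0,0,u,0)(f_1\otimes \phi )$ is bounded in $\colp$ because $\pi $ acts isometrically there, while $\|M_uf_1\otimes \cN _u\phi \|_{M^p_m}\gtrsim \|M_uf_1\otimes \cN _u\phi \|_{M^p}\asymp (1+u^2)^{\frac{1}{2p}-\frac14}\to \infty$, so the spaces differ; the case $p>2$ is handled by duality. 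You never invoke the $L^2$-embedding reduction or the isometric invariance of $\colp$; instead you assume equality, use the first test family (second factor a time-frequency shifted Gaussian, exploiting that the coorbit chirp strength $x_4$ is the frequency of the \emph{first} factor and hence blind to $(\alpha ,\beta )$) to pin the weight down to $m\asymp (1+|\eta _1|)^{\frac1p-\frac12}$, and then refute the resulting sharp two-sided chirp estimate with the $\delta $-dependent test function $f_2=\cN _{-\delta }\phi$ via the group law $\cN _\delta \cN _{-\delta }=\mathrm{I}$. What your approach buys: it is self-contained in that it avoids the external weight characterization for $M^p_m\subseteq L^2$, it treats $2<p\leq \infty $ directly without duality (the exponent $2(\frac1p-\frac12)$ produces a contradiction in both directions), and it yields strictly more information, namely that the only candidate weight compatible with the tensor-product norms is $(1+|\eta _1|)^{\frac1p-\frac12}$ and that even this weight fails. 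What it costs: it is longer, and the uniformity of the reductions in $(\alpha ,\beta )$ and $\delta $ must be tracked — you correctly flag this; the needed ingredients (the moderate function of $\tilde m_{\alpha ,\beta }$ is $v(\cdot ,0,\cdot ,0)$, hence independent of $(\alpha ,\beta )$, and the polynomially bounded operator norm of $\cN _v$ on $M^p$ from \cite[Lemma~5.3]{CN08a} to absorb the Gaussian tails in the $x_4$-integral) are all available in the paper. Both proofs ultimately rest on the same phenomenon quantified in Proposition~\ref{chirp} and the tensor factorization \eqref{eq:fst}; also note that your tacit step from set equality to norm equivalence is justified by the closed graph theorem, since both spaces embed continuously into $\cS '(\bR ^2)$.
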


\begin{proof}
  Again, we only treat $1\leq p<2$ and use duality for $p>2$. Since
   $\colp (G/Z) \subseteq L^2(\bR ^2)$ by Proposition~\ref{props}(v),
  we only need to compare to those \modsp s $M^p_m(\bR ^2)$ that are
  embedded in $L^2(\bR ^2 )$. Indeed, if $M^p_m(\bR ^2) \not \subseteq
  L^2(\bR ^2)$, then there exists $h\in M^p_m(\bR ^2)
  \setminus L^2(\bR ^2)$ and this function $h$ cannot be in $  \colp
  (G_{5,3}/Z) \subseteq L^2(\bR ^2)$.

  For $p<2$ the embedding
  $M^p_m(\bR ^2)\subseteq L^2(\bR ^2) =  M^2(\bR ^2)$ implies that  $m$ is bounded
  below, $m(\dot x) \geq C_0 >0$ for all $\dot x \in \bR
  ^4$ (check   for instance~\cite[Thm.~12.2.2]{book} and
  \cite[Thm.~4]{fg89mh}). In particular, we also have the embedding  $M^p_m(\bR ^2) \subseteq M^p(\bR ^2)$. 
    We now  use the second proof and compute the norm of $M_u f_1 \otimes
  \cN _u \phi $ in $M^p_m(\bR ^2)$. Since the \stft\
  factors as
  \begin{align*}
  S_{\phi \otimes \phi} (M_u f_1 \otimes \cN _u\phi
  )(x_1,x_2,\xi_1,\xi_2) &= S_\phi (M_u f_1)(x_1,\xi_1) \, S_\phi
  (\cN_u \phi)(x_2,\xi_2)\\    
&= S_\phi f_1(x_1,\xi_1-u) \, S_\phi   (\cN_u \phi)(x_2,\xi_2) \, ,
  \end{align*}
   the norm of $M_u f_1 \otimes
\cN _u\phi $ is
\begin{align*}
  \|M_u f_1 \otimes   \cN _u \phi \|_{M^p_m}^p & =\int _{\bR ^4} 
 |S_\phi f_1(x_1,\xi_1-u)|^p \, |S_\phi   (\cN_u \phi)(x_2,\xi_2)|^p 
 m(x_1,x_2,\xi _1,\xi_2)^p \, dx_1dx_2d\xi_1d\xi_2\\ 
&\geq  C_0^p \int _{\bR ^4} 
|S_\phi f_1(x_1,\xi_1-u)|^p \, |S_\phi   (\cN_u \phi)(x_2,\xi_2)|^p  \, dx_1dx_2d\xi_1d\xi_2\\
&\asymp \int _{\bR ^2} |S_\phi f_1(x_1,\xi_1)|^p \, 
  \|\cN_u \phi \|^p_{M^p} \, dx_1d\xi_1\\
&\asymp \int _{\bR ^2} |S_\phi f_1(x_1,\xi_1)|^p \, 
(1+u^2)^{\frac{1}{2}-\frac{p}{4}}   \, dx_1d\xi_1\\
&= (1+u^2)^{\frac{1}{2}-\frac{p}{4}} \|f_1\|_{M^p}^p 
 \asymp  (1+u^2)^{\frac{1}{2}-\frac{p}{4}} \|f_1\otimes \phi  \|_{M^p}^p \, .
\end{align*}
Again, we have used Proposition~\ref{chirp} and \eqref{eq:m1}. 
We conclude that the orbit  $M_uf_1 \otimes \cN_u\phi $ is bounded
in $\colp (G_{5,3}/Z)$, but unbounded in $M^p_m(\bR ^2)$. Therefore the  two
spaces cannot be equal.   
\end{proof}

The  above arguments are  typical and can be applied to several other groups in
Nielsen's list. 
In  the following  we only deal with unweighted
versions of $\colp $.

\subsection{The group $G_{6,19}$}

This  is the six-dimensional group $G_{6,19} \simeq \bR ^6$
defined by the Lie brackets
$$
[X_6,X_5]=X_4, \quad [X_6,X_3]=X_1, \quad [X_5,X_4]=X_2
$$
with group multiplication
\begin{align*}
\lefteqn{x\cdot y = (x_1, \dots , x_6)\cdot (y_1, \dots , y_6) =} \\ & (x_1+y_1 +
x_6y_3, x_2+y_2+x_5y_4+x_5x_6y_5+\tfrac{1}{2}x_6y_5^2, x_3+y_3, x_4+ y_4+x_6y_5, x_5+ y_5,  x_6+
y_6) \, .
\end{align*}

This group  possesses a two-parameter family of square-integrable
representations $\pi _{\lambda,\mu }, \lambda\mu \neq 0$,  modulo the center $\bR ^2 \times \{ (0,0,0,0)\}$
acting on $L^2(\bR ^2)$ by the operators
\begin{align*}
\lefteqn{\pi _{\lambda ,\mu } (x_1, \dots , x_6) g(s,t) =} \\ & \exp  2\pi i \Big(
\lambda (x_1-x_3  t)   +\mu (x_2-\tfrac{1}{2} x_5^2x_6 -x_4s+x_5x_6s-\tfrac{1}{2} x_6s^2) \Big)
g(s-x_5,t-x_6) \, .
\end{align*}
When using the absolute values $|V^{\pi _{\lambda ,\mu }} _gf|$, the
phase factor $e^{2\pi i (\lambda x_1 +\mu (x_2-x_5^2x_6/2))}$
disappears, and  we may  omit it in the definition of the transform  $V^\pi _g$.
In addition, we identify the quotient $G_{6,19}/Z$ with a convenient  section
$G_{6,19}/Z \to G_{6,19}$ and will write $\dot x = (0,0,x_3, \dots , x_6) \in
G_{6,19}/Z$. Then   the representation can be written   with the help of  \tfs s as 
\begin{equation}
  \label{eq:l8a}
  \pi _{\lambda , \mu }(\dot x) = M_{(\mu (-x_4+x_5x_6), -\lambda
    x_3)} \cN _{\mu x_6}^{(1)} T_{(x_5,x_6)}  =  e^{i\pi \mu x_5^2} M_{(-\mu x_4, -\lambda
    x_3)} T_{(x_5,x_6)} \cN _{\mu x_6}^{(1)}  \, ,
\end{equation}
where  $\cN _{u}^{(1)}g(s,t) = e^{-i\pi u s^2} g(s,t)$ is a multiplication
by a chirp in the variable $s$.   Using the \stft , the cleaned-up \rep\ coefficient is
$$
V_g^{\pi _{\lambda , \mu }}f(\dot x) = S_{\cN^{(1)}_{\mu x_6}g} f \big(
(x_5,x_6), (-\mu x_4,-\lambda x_3)\big) \, .
$$
The underlying phase space consists of the ``frequency/momentum''
variables $x_3,x_4$ and the ``time/position'' variables $x_5,x_6$. The
pairs $(x_4,x_5)$ and $(x_3,x_6)$ are conjugate, because $[X_4,X_5]$
and $[X_3,X_6] \in \mathfrak{z}$. 
Given a weight function $m$ on $G/Z$, the coorbit space with respect 
to $\pi _{\lambda,\mu } $ is defined by the norm
\begin{align*}
\| f \|_{\colpm} ^p &= \int _{\bR ^4} |V^{\pi _{\lambda ,\mu }}_g f
  (\dot x)| ^p \, m(\dot x )^{p} \, d\dot x \\ &= \int_{\bR ^4} |S_{\cN_{\mu x_6}^{(1)}g} f \big(
(x_5,x_6), (-\mu x_4,-\lambda x_3)\big)|^p \, m(\dot x)^p  dx_3 \dots dx_6  \, .  
\end{align*}
By replacing $g(s,t)$ by $g(\mu ^{-1/2}s,t)$ and a change of variables in
the integral, one can  see that the class of coorbit spaces does not depend on the
parameters $\lambda ,\mu \neq 0$ of the representation. Different
parameters affect only the weight function, but not the type of the
space. 

Again the question arises whether we have defined a new class of
function spaces or not. 

  \begin{prop}
   Let $\lambda \mu\neq 0$ and  $p,q\in [1,\infty ], p \neq 2$. Then 

(i) $Co _{\pi_{\lambda,\mu}} L^p (G_{6,19}/Z) \neq M^q(\bR
    ^2)$, and 

(ii) $Co _{\pi_{\lambda,\mu}} L^p(G_{6,19}/Z) \neq \co L^q (G_{5,3}/Z)$. 
  \end{prop}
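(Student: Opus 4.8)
The plan is to run, for both parts, the same dichotomy used in the two proofs of Proposition~\ref{neqs}: a coarse Banach-space invariant settles the case $p\neq q$, and a fine invariance argument settles $p=q$. If $p\neq q$, then by Proposition~\ref{props}(iv) the space $Co _{\pi _{\lambda ,\mu }}L^p(G_{6,19}/Z)$ is isomorphic to $\ell ^p$, whereas both $M^q(\bR ^2)$ and $\co L^q(G_{5,3}/Z)$ are isomorphic to $\ell ^q$; since $\ell ^p\not\simeq \ell ^q$ for $p\neq q$ (this also disposes of $q=2$), the spaces cannot coincide. It then remains to treat $p=q$, which forces $q\neq 2$ as well.

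The separating device is the one-parameter subgroup of $G_{6,19}$ carrying the chirp in the first variable. I would set $U_v:=\pi _{\lambda ,\mu }(0,0,0,0,0,v)=\cN _{\mu v}^{(1)}T_{(0,v)}$ as in \eqref{eq:l8a}. By Proposition~\ref{props}(i) each $U_v$ is an isometry on $Co _{\pi _{\lambda ,\mu }}L^p(G_{6,19}/Z)$, so the orbit $\{U_v(\phi \otimes \phi )\}$ of the Gaussian is bounded in the $G_{6,19}$-norm. Since $U_v(\phi \otimes \phi )=(\cN _{\mu v}\phi )\otimes (T_v\phi )$, the tensor factorization \eqref{eq:fst} and the isometry of $T_v$ on $M^p$ give, for part (i),
$$
\|U_v(\phi \otimes \phi )\|_{M^p(\bR ^2)}=\|\cN _{\mu v}\phi \|_{M^p(\bR )}\,\|\phi \|_{M^p(\bR )}\asymp |v|^{\frac 1p-\frac 12}
$$
by Proposition~\ref{chirp} and \eqref{eq:m1}. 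For $1\leq p<2$ this tends to infinity, contradicting boundedness in the $G_{6,19}$-norm; this is precisely the second proof of Proposition~\ref{neqs}, now with the chirp in the $s$-variable. The range $2<p\leq \infty $ then follows from the duality of Proposition~\ref{props}(iii), which reduces it to the settled exponent $p'<2$.

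For part (ii) I would measure the same orbit in the $G_{5,3}$-norm. Using the tensor formula behind \eqref{eq:h3}, namely $V_g^{\pi }(f_1\otimes f_2)=S_\phi f_1\cdot S_\phi (\cN _{x_4}f_2)$, together with the identity $\cN _{x_4}T_v=e^{-i\pi x_4v^2}T_vM_{-x_4v}\cN _{x_4}$ (whence $\|\cN _{x_4}T_v\phi \|_{M^p}=\|\cN _{x_4}\phi \|_{M^p}$, as modulations and translations are $M^p$-isometries), one reduces to
\begin{equation*}
\|U_v(\phi \otimes \phi )\|_{\colp (G_{5,3}/Z)}^p\asymp \int _{\bR ^2}|S_\phi (\cN _{\mu v}\phi )(x_3,x_4)|^p\,(1+x_4^2)^{\frac 12-\frac p4}\,dx_3\,dx_4 .
\end{equation*}
Inserting the explicit modulus \eqref{eq:app0} and substituting $x_4=\sqrt{4+(\mu v)^2}\,w-\tfrac{\mu v}{2}x_3$ to follow the frequency-spreading of the chirped Gaussian, I expect the leading order $\asymp |v|^{2-p}$, so that $\|U_v(\phi \otimes \phi )\|_{\colp (G_{5,3}/Z)}\asymp |v|^{(2-p)/p}\to \infty $ for $p<2$. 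This contradicts boundedness of the orbit in the $G_{6,19}$-norm, and the case $2<p\leq \infty $ again follows from Proposition~\ref{props}(iii).

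The main obstacle is exactly this coupled Gaussian integral in part (ii): in contrast to part (i), where the one-dimensional estimate \eqref{eq:m1} applies verbatim, here the foreign chirp in $s$ is transported through the $G_{5,3}$-transform and couples to the intrinsic chirp parameter $x_4$ of that transform. The mechanism is that the chirp forces the effective support in $x_4$ to grow linearly in $v$, while $x_4$ is simultaneously the parameter governing $\|\cN _{x_4}\phi \|_{M^p}\asymp (1+x_4^2)^{\frac{1}{2p}-\frac 14}$, so the two effects reinforce rather than cancel. Making the exponent $2-p$ rigorous requires checking that the substitution produces an integrable singularity $|w-\tfrac{x_3}{2}|^{1-p/2}$ (valid for $p<4$, hence for $p<2$) contributing a finite nonzero constant after integration against the Gaussians in $w$ and $x_3$; for the contradiction only the lower bound is needed. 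Conceptually, this integral is what records that the chirp lives in different variables for the two groups, which is the genuine reason $G_{5,3}$ and $G_{6,19}$ yield distinct coorbit spaces.
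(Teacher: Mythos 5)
Your proposal is correct, but in both parts it takes a route different from the paper's own proof. For (i), the paper argues in the style of the \emph{first} proof of Proposition~\ref{neqs}: it computes $\|\phi \otimes f_2\|_{Co_{\pi_{\lambda,\mu}}L^p}$, identifies it with a weighted norm $\|f_2\|_{M^p_v}$ with $v(x_6,x_3)=(1+x_6^2)^{\frac{1}{2p}-\frac14}$, and exhibits an explicit $f_2\in M^p\setminus M^p_v$, i.e.\ a concrete element of $M^p(\bR^2)$ outside the coorbit space; you instead run the orbit--isometry argument (the style of the second proof of Proposition~\ref{neqs}), which is equally valid, and your explicit treatment of $p\neq q$ via $\ell^p\not\simeq\ell^q$ and of $p>2$ via duality matches what the paper leaves implicit here. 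For (ii) the difference is substantive: the paper tests the chirp subgroup $\pi(0,0,0,u,0)$ of $G_{5,3}$, isometric on $\colp(G_{5,3}/Z)$, against the $G_{6,19}$-norm and finds growth $(1+u^2)^{\frac{1}{2p}-\frac14}$, whereas you reverse the direction and test $U_v=\pi_{\lambda,\mu}(0,0,0,0,0,v)=\cN^{(1)}_{\mu v}T_{(0,v)}$ against the $G_{5,3}$-norm. Your coupled Gaussian integral is right, and the exponent can be obtained without your substitution: for $u=\mu v>0$ large, restricting the integral to $|x_3|\le 1$ and $x_4\in[u,2u]$, where the weight is $\gtrsim u^{1-p/2}$ and both Gaussian factors are bounded below, gives $\|U_v(\phi\otimes\phi)\|^p_{\colp(G_{5,3}/Z)}\gtrsim u^{-p/2}\cdot u \cdot u^{1-p/2}=u^{2-p}$, which already yields the contradiction for $p<2$; also, for $p<2$ the factor $\lvert w-\tfrac{x_3}{2}\rvert^{1-p/2}$ in your substitution is a zero rather than a singularity, so no integrability check is needed. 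Comparing the two directions: yours produces the faster rate $|v|^{\frac2p-1}$ (versus the paper's $|u|^{\frac1p-\frac12}$) precisely because, as you observe, the chirp's frequency spread and the weight both live in the variable $x_4$ and reinforce, while in the paper's direction the spread ($x_3$) and the weight ($x_6$) occupy different variables, so the integral decouples and is computationally lighter. What your route buys is economy and symmetry: the single orbit $U_v(\phi\otimes\phi)=(\cN_{\mu v}\phi)\otimes(T_v\phi)$, bounded in $Co_{\pi_{\lambda,\mu}}L^p(G_{6,19}/Z)$ by Proposition~\ref{props}(i), is shown unbounded in both comparison spaces, settling (i) and (ii) simultaneously. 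One small caveat, shared with the paper: the duality reduction via Proposition~\ref{props}(iii) covers $2<p<\infty$ only; at $p=\infty$ note instead that the orbit norms tend to $0$ (all the exponents are negative), which contradicts norm equivalence just as well.
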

   
  \begin{proof}
(i) As in the case of $G_{5,3}$ we use tensor products, namely $g(s,t) =
e^{-\pi (s^2+t^2)} = (\phi \otimes \phi )(s,t)$ and $f = \phi \otimes
f_2$ for suitable $f_2$. Then 
\begin{align}
  \label{eq:p1}
  V_g^{\pi_{\lambda,\mu}}f(\dot x)  &= \langle \phi , M_{-\mu
    x_4}T_{x_5} \cN _{\mu x_6}\phi \rangle \, \langle f_2, M_{-\lambda 
     x_3}T_{x_6} \phi\rangle  \notag \\
  &= e^{2\pi i \mu x_4x_5} \overline{S_\phi (\cN _{\mu x_6}\phi )(-x_5,
  \mu x_4)} \, S_\phi f_2( x_6,-\lambda x_3) \, .
\end{align}
Consequently,
\begin{align}
  \|f\|_{Co _{\pi_{\lambda,\mu} } L^p} ^p &= \int _{\bR ^4}    |V_g^{\pi_{\lambda,\mu}}f(\dot x) |^p \,
                      dx_3dx_4dx_5dx_6 \notag \\ 
& = \int _{\bR ^2} \|\cN _{\mu x_6}\phi \|_{M^p(\bR )}^p \,  |S_\phi
f_2( x_6,-\lambda x_3)|^p \, dx_3dx_6\, . 
\end{align}
Since by~\eqref{eq:a3} we have $\|\cN _{\mu x_6} \phi \|_{M^p(\bR )} \asymp
  (4+\mu^2 x_6^2)^{\frac{1}{2p} - \frac{1}{4}} \asymp
  (1+x_6^2)^{\frac{1}{2p} - \frac{1}{4}}:= v(x_6,x_3)$, the $Co _{\pi_{\lambda,\mu} } L^p (G_{6,19}/Z)$-norm of $\phi
  \otimes f_2$ is 
  $$
  \|\phi \otimes f_2\|_{Co _{\pi_{\lambda,\mu} } L^p } ^p \asymp \int _{\bR ^2} |S_\phi
  f_2(x_6,-\lambda x_3)|^p v(x_6,-\lambda x_3)^p \, dx_6 dx_3 \, .
  $$
 We see that $\phi \otimes f_2$ is in $Co _{\pi_{\lambda,\mu} } L^p(G_{6,19}/Z) $, \fif\ $f_2 \in
  M^p_v(\bR )$. By choosing $f_2\in M^p(\bR ) \setminus M^p_v(\bR )$, we have
  constructed a function $\phi \otimes f_2 \in M^p(\bR ^2)$, but $\phi \otimes
  f_2 \not \in Co _{\pi_{\lambda,\mu} } L^p(G_{6,19}/Z) $.

  (ii)  We take the one-parameter subgroup
    $\pi (0,0,0,u,0)$ of $G_{5,3}$ acting on $ f=  f_1 \otimes f_2 $
    and show that it acts unboundedly on $Co _{\pi _{\lambda,\mu } } L^p
    (G_{6,19}/Z)$. 
 By \eqref{eq:l3},  
$\pi (0,0,0,u,0)(f_1\otimes f_2)  = M_uf_1 \otimes \cN _uf_2\phi $ and 
$\pi _{\lambda,\mu}(0,0,x_3, \dots , x_6)(\phi \otimes \phi )  = M_{-x_4}T_{x_5}\cN _{x_6}\phi
\otimes M_{-x_3}T_{x_6}\phi $. The corresponding representation
coefficient is then 
\begin{align*}
  V_{\phi \otimes \phi}^{\pi _{\lambda,\mu}}(\pi (0,0,0,u,0)(f_1
  \otimes f_2)) (x_3,
  \dots, x_6) &= \langle M_uf_1 \otimes \cN _uf_2, M_{-\mu
                x_4}T_{x_5}\cN _{\mu x_6}\phi
\otimes M_{-\lambda x_3}T_{x_6}\phi \rangle \\
&= \langle T_{-x_5} M_{u+\mu x_4} f_1, \cN _{\mu x_6}\phi \rangle \, \langle
 \cN _u f_2,  M_{-\lambda x_3}T_{x_6}\phi \rangle \\
&= e^{2\pi i  x_5(u+\mu x_4)} \overline{S_{f_1} (\cN _{\mu x_6}\phi) (-x_5,
 \mu  x_4+u) } \, S_\phi (\cN _u f_2)(x_6,-\lambda x_3) \, .
\end{align*}
We now choose $f_1= f_2 = \phi $ and obtain
\begin{align*}
\lefteqn{  \|\pi (0,0,0,u,0)(\phi \otimes \phi )\|_{Co _{\pi _{\lambda,\mu}} L^p  }^p=}\\ &= \int_{\bR ^2}\int_{\bR ^2}
|S_{\phi} (\cN _{\mu x_6}\phi) (-x_5,   \mu x_4+u)|^p  \, |S_\phi (\cN   _u\phi)(x_6,-\lambda x_3)|^p
\, dx_4dx_5 \, dx_3dx_6 \\
&= \mu \inv \int_{\bR ^2} \|\cN _{x_6}\phi \|_{M^p}^p |S_\phi (\cN _u \phi)(x_6,-\lambda x_3)|^p
\, dx_3dx_6 \\ 
&\asymp \int_{\bR ^2} (1+|x_6^2|)^{\frac{1}{2}-\frac{p}{4}}  |S_\phi
                  (\cN _u\phi )(x_6,-\lambda x_3)|^p
\, dx_3dx_6  \, .
\end{align*}
In the last expression we substitute \eqref{eq:app0} for $|S_\phi (\cN _u\phi )|$ and
continue with
\begin{align*}
  \lefteqn{  \|\pi (0,0,0,u,0)(\phi \otimes \phi )\|_{Co _{\pi
  _{\lambda,\mu}} L^p  }^p\asymp }\\
&\asymp \int_{\bR ^2} (1+|x_6^2|)^{\frac{1}{2}-\frac{p}{4}} (4+u^2)^{-p/4}
                 e^{-\pi p x_6^2/2} e^{-2\pi p(-\lambda x_3+ux_6/2)^2/(4+u^2)}
\, dx_3dx_6 \\
&= \lambda ^{-1/2}  \int_{\bR } (1+|x_6^2|)^{\frac{1}{2}-\frac{p}{4}} (4+u^2)^{-\frac{p}{4}+\frac{1}{2}}
 \, e^{-\pi p x_6^2/2} \, dx_6\\
&\asymp   (4+u^2)^{-\frac{p}{4}+\frac{1}{2}} \, .
\end{align*}
 We conclude that
$$
  \|\pi (0,0,0,u,0)(\phi \otimes \phi )\|_{Co _{\pi_{\lambda,\mu}} L^p (G_{6,19}/Z) } \asymp
  (1+u^2)^{\frac{1}{2p}-\frac{1}{4}} \, .
 $$
Since $\pi (0,0,0,u,0)$ is an isometry on $\colp (G_{5,3}/Z)$, but not
on $Co _{\pi _{\lambda,\mu}} L^p(G_{6,19}/Z)$, these two spaces cannot be equal. 
\end{proof}

To summarize, we have constructed three families of function spaces on
$\bR ^2$ that are obtained as coorbit spaces with respect to the 
 nilpotent Lie groups $\bH _2,
G_{5,3}$ and $G_{6,19}$. After quotienting out their  centers,
these  groups are non-isomorphic. As  these three families of
coorbit spaces have different invariance properties, we were able to
show that they  are
distinct. From the point of view of the theory of function spaces, we
have discovered two brand-new families of function spaces. 

\subsection{The Dynin-Folland group} 
Let $\mathfrak{g} := \bR -  \mathrm{span}
\, \{Z, Y_1 , Y_2 , Y_3 , X_1 , X_2 , X_3 \}$ with
non-trivial Lie brackets
$$
[ X_3, Y_1 ] = [X_2 , Y_2 ] = [ X_1, Y_3 ] = Z, [ X_2, Y_3 ] =
\tfrac{1}{2} Y_1 ,
[ X_3, Y_3 ] = -\tfrac{1}{2} Y_2 \,\, \text{ and } \,\,  [X_3 , X_2 ] = X_1  \, .
$$
We label the elements of $G$ as
$(z,y_1,y_2,y_3,x_1,x_2,x_3)$.
This group possesses a one-parameter family of  square-integrable
representations modulo center $\pi _\lambda \in SI(G/Z)$ that act on
$L^2(\bR ^3)$, or more precisely, on $L^2(\bH _1)$ of the Heisenberg group
$\bH _1$.  As they are
obtained by a dilation  from each other, we may restrict to
$\lambda =1$   and set
\begin{equation}
  \label{eq:n6}
 \Big(\pi(z,y_1,y_2,y_3,x_1,x_2,x_3)f \Big)(t_3 , t_2 , t_1 ) =
 e^{2\pi i(z+\sum _{k=1}^3 t_j y_j - t_1 t_2 y_3/2)}
 f (t_3 + x_1 + t_1 x_2 , t_2 + x_2 , t_1 + x_3 ) \, .
\end{equation}
Again,  this  representation acts on $f\in L^2(\bR ^3 )$ by
means of generalized modulations and translations as in \eqref{eq:1}. In fact, the
translations are just the multiplication in the Heisenberg group
$\bH_1$, the modulations are just the standard modulations.  The
most interesting item is the multiplication by the chirp
$f(t_3,t_2,t_1)  \to  e^{-\pi i t_1t_2 u}f(t_3,t_2,t_1)$. 
For background and detailed derivations of the multiplication and the
representations of $G$ we refer to  \cite{FRR19,rottensteinerthesis}.

We now show that the resulting coorbit spaces are different from the
\modsp s. This was  one of the main results
of \cite{FRR19}, where it was proved in much greater generality.
The proof in \cite{FRR19} requires substantial
parts of the theory of decomposition spaces~\cite{FV15,Voi18} and is based on the subtle identification of
$\colp $ with a decomposition space.
It seems therefore worthwhile to have a simple,
short, and self-contained   proof. Ours  is based on the different invariance properties of the coorbit
spaces.

\begin{prop}
  For $p\in [1,\infty], p\neq 2$ we have
  $\colp (G/Z)  \neq M^p(\bR ^3)$. 
\end{prop}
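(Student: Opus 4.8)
The plan is to reuse the invariance mechanism from the second proof of Proposition~\ref{neqs}: exhibit a one-parameter subgroup of the representation \eqref{eq:n6} that acts isometrically on $\colp(G/Z)$ by Proposition~\ref{props}(i), yet whose orbit norm in $M^p(\bR^3)$ is not comparable to a constant. Reading off \eqref{eq:n6} with $z=y_1=y_2=0$, $y_3=u$ and all $x_j=0$, the relevant subgroup is
\begin{equation*}
\pi(0,0,0,u,0,0,0)f(t_3,t_2,t_1) = e^{2\pi i u t_3}\,e^{-\pi i u t_1 t_2}\,f(t_3,t_2,t_1),
\end{equation*}
that is, a modulation $M_u$ in the variable $t_3$ composed with the chirp $\cN_{uC}$ acting jointly in the pair $(t_1,t_2)$, where $uC=\Big(\begin{smallmatrix} 0 & u/2 \\ u/2 & 0 \end{smallmatrix}\Big)$ is precisely the matrix of \eqref{eq:m2}.

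Next I would test this subgroup on the Gaussian $f=\phi\otimes\phi\otimes\phi$ on $\bR^3$, written as $f(t_3,t_2,t_1)=\phi(t_3)\,\psi(t_2,t_1)$ with $\psi(t_2,t_1)=e^{-\pi(t_1^2+t_2^2)}$ the two-dimensional Gaussian, and with the matching Gaussian window. Because the window is a product, the \stft\ factors across the splitting $\bR\times\bR^2$ (the tensor rule \eqref{eq:fst}), so
\begin{equation*}
\|\pi(0,0,0,u,0,0,0)f\|_{M^p(\bR^3)} = \|M_u\phi\|_{M^p(\bR)}\,\|\cN_{uC}\psi\|_{M^p(\bR^2)}.
\end{equation*}
The modulation is an isometry on $M^p(\bR)$, while the $d=2$ case of Proposition~\ref{chirp} recorded in \eqref{eq:m2} gives $\|\cN_{uC}\psi\|_{M^p(\bR^2)}\asymp|u|^{\frac{2}{p}-1}$. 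Hence $\|\pi(0,0,0,u,0,0,0)f\|_{M^p(\bR^3)}\asymp|u|^{\frac{2}{p}-1}$, which for $p\neq 2$ is unbounded (if $p<2$) or tends to $0$ (if $p>2$) as $u\to\infty$; in either case it is not comparable to a positive constant. By contrast $\|\pi(0,0,0,u,0,0,0)f\|_{\colp}=\|f\|_{\colp}$ is constant by Proposition~\ref{props}(i). Since both spaces embed continuously into $\cS'(\bR^3)$, equality as sets would force equivalence of the two norms (closed graph theorem), contradicting the behaviour just computed; thus $\colp(G/Z)\neq M^p(\bR^3)$. Alternatively, one may restrict to $1\le p<2$ and pass to $p>2$ by the duality of Proposition~\ref{props}(iii), exactly as in Proposition~\ref{neqs}.

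The one genuinely new point, and the place I would be most careful, is that here the decisive chirp is the two-dimensional cross-term $e^{-\pi i u t_1 t_2}=\cN_{uC}$ rather than the one-dimensional chirp $e^{-\pi i u t^2}$ that appeared for $G_{5,3}$. This replaces the growth exponent $|u|^{\frac{1}{p}-\frac{1}{2}}$ by $|u|^{\frac{2}{p}-1}$ and requires invoking the $d=2$ computation \eqref{eq:m2} together with the factorization of the \stft\ over the splitting $\bR\times\bR^2$: the chirp couples $t_1$ and $t_2$ and is \emph{not} itself a tensor product, so one cannot simply split all three variables coordinatewise. Once the subgroup is correctly isolated from \eqref{eq:n6} and one tracks which variable carries the modulation and which pair carries the chirp, the remainder of the argument is routine and identical in spirit to the second proof of Proposition~\ref{neqs}.
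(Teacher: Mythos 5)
Your proposal is correct and is essentially the paper's own proof: you isolate the same one-parameter subgroup $e^{uY_3}$ from \eqref{eq:n6}, recognize the cross-term chirp as $\cN_{uC}$ with the same matrix as in \eqref{eq:m2}, test on the same Gaussian tensor split over $\bR\times\bR^2$ so the \stft\ factors, and conclude from the growth $|u|^{\frac{2}{p}-1}$ in $M^p(\bR^3)$ versus the isometric action on $\colp (G/Z)$ guaranteed by Proposition~\ref{props}(i). The only cosmetic difference is that you additionally offer a direct closed-graph argument covering $p>2$, whereas the paper restricts to $1\leq p<2$ and passes to $p>2$ by the duality of Proposition~\ref{props}(iii), exactly as in Proposition~\ref{neqs}; both routes are valid.
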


\begin{proof}
  The argument is similar to the proof of Proposition~\ref{neqs}. By
  inspection of \eqref{eq:n6} we see that the one-parameter subgroup
  $(0,0,0,u,0,0,0) = e^{uY_3}$ acts on $f\in L^2(\bR )$ as
  \begin{equation}
    \label{eq:n10}
    \pi (e^{uY_3})f(t_3,t_2,t_1) = e^{2\pi i ut_3} \, e^{-\pi i u
      t_1t_2} f(t_3,t_2,t_1) \, .    
  \end{equation}
  We recognize the first exponential as a modulation with respect to
  the variable $t_3$ and the second exponential as a chirp 
  with respect to the variables $t_2,t_1$. To simplify, we write
  $\bar{t} = (t_2,t_1)$ and the quadratic form $(t_1,t_2) \to t_1t_2$
  as $C\bar{t} \cdot \bar{t}$ with the matrix $C = \tfrac{1}{2}
  \Big(\begin{smallmatrix}
    0 & 1 \\ 1 & 0
  \end{smallmatrix} \Big) $. Again we take $f$ to be a tensor product
  $f (t_3,t_2,t_1)  = f_3(t_3) \phi _2 (\bar{t})$ with the
  two-dimensional Gaussian $\phi _2 (\bar{t}) = e^{-\pi \bar{t} \cdot
    \bar t}$. Then the action of the one-parameter group generated by
  $Y_3$ is 
  \begin{equation}
    \label{eq:n11}
    \pi (e^{uY3}) (f_3 \otimes \phi ) = M_u f_3 \otimes  \cN _{uC}
    \phi _2 \, , 
  \end{equation}
  and its \stft\ is 
  \begin{equation}
    \label{eq:n12}
    S_{\phi \otimes \phi _2} \big( \pi (e^{uY_3}) (f_3 \otimes \phi
    _2) \big) (x_3,\bar x, \xi
    _3, \bar \xi ) = S_\phi (M_uf_3)(x_3,\xi _3) \, S_{\phi _2} (\cN
    _{uC} \phi _2) (\bar x, \bar \xi )\, .
  \end{equation}
Using the general version of Proposition~\ref{chirp}, specifically
\eqref{eq:m2},  the \modsp -norm
on $\bR ^3$ is therefore  
\begin{align*}
  \|    S_{\phi \otimes \phi _2} \big( \pi (e^{uY_3}) (f_3 \otimes \phi
  _2) \big) \|_{M^p}& = \|M_u f_3\|_{M^p( \bR )} \, \|\cN _{uC} \phi _2\|_{M^p(\bR ^2)}\\ 
&\asymp \|f_3\|_{M^p( \bR )} \,  (4 + \tfrac{u^2}{4})^{\tfrac{1}{p}-\tfrac{1}{2}}
\, .                      
\end{align*}
 For $1\leq p <2$ the one-parameter group $e^{uY_3}$ acts unboundedly on
$M^p(\bR ^3)$, and we conclude that $\colp \neq M^p(\bR ^3) $ for
$1\leq p<2$. For $p>2$ we use duality.  
\end{proof}

\section{Atomic decompositions}

The abstract version of atomic decompositions in coorbit spaces was
developed in~\cite{fg89jfa,fg89mh,gro91}. They yield  series
expansions of elements in a coorbit space with respect to elements in
the orbit of the representation. Following a costum in coorbit theory,
we briefly summarize
 these results and update them with recent results that are  specific
 for  nilpotent groups.  

The first goal is  to construct a
  ``window'' function $g\in \cH $ and 
  (relatively) discrete set $\Lambda \subseteq G$, such that there
  exist constants $A,B>0$, such that
  \begin{equation}
    \label{eq:3}
A \|f\|_\cH ^2 \leq     \sum _{\lambda \in \Lambda } |\langle f, \pi (\lambda )g \rangle |^2
\leq B \|f\|_\cH ^2 \qquad \forall f \in \cH \, .
\end{equation}
A set $\cG (g,\Lambda ) = \{ \pi (\lambda )g: \lambda \in \Lambda \}$
satisfying \eqref{eq:3} is called a (coherent) frame.

If $G$ is nilpotent with center $Z$ and $\pi $ is irreducible, then,
for arbitrary $z_\lambda \in Z$,  the set $\{ \pi (z_\lambda
\lambda )g : \lambda \in \Lambda  \}$ is also a frame with the same
 bounds $A,B$. We could have started with a projective representation
of $G/Z$ and formulated everything in $G/Z$. However, since the
construction of irreducible unitary representations of a nilpotent Lie
group is on the level of $G$, we prefer to work on the group level.
To take into account this ambiguity, we will use  discrete sets
$\tilde{\Lambda}\subseteq G/Z$ and an arbitrary section
$\tilde{\Lambda} \to \Lambda \subseteq G$.

We also need the notion of ``nice'' vectors. We say that $g\in \cH $
is nice, if $g = \pi (k)g_0= \int _G k(x) \pi (x) g_0 \, dx$ for some
$g_0 \in \cH $ and $k$ continuous with compact support (or $k$ in the
amalgam space $W(C,\ell ^1)(G)$). 

\begin{tm}[Existence of coherent frames]  \label{tm1-abstr}
Let $(\pi , \cH )$ be an irreducible unitary representation of $G$
that is square-integrable modulo center   and let $g\in \cH $ be a
nice vector. Then there exists a neighborhood $U\subseteq G/Z$ of $e$
with the following property:

Assume that  $\tilde \Lambda \subseteq G/Z $ is $U$-dense and
relatively separated, i.e., $\bigcup _{\tilde{\lambda} \in\tilde{\Lambda }}
\tilde \lambda U = G/Z$ and $\max _{x\in G/Z} \# (\tilde{\Lambda} \cap xU)
<\infty $, and  let $\Lambda \subseteq G $ be some  preimage of
$\tilde{\Lambda }$.

Then the set $\cG (g,\Lambda ) = \{ \pi (\lambda ) g :
\lambda \in \Lambda \}$ is a frame for $\cH $. 
\end{tm}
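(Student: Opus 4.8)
The plan is to run the standard Feichtinger--Gr\"ochenig machinery, transferring the abstract frame inequality \eqref{eq:3} into a sampling problem for the reproducing kernel space $V_g^\pi(\cH) \subseteq L^2(G/Z)$ and then controlling the sampling error by the oscillation of the kernel. First I would record the two consequences of square-integrability modulo center contained in \eqref{eq:2}: taking $f_1 = f_2 = f$ and $g_1 = g_2 = g$ shows that $V_g^\pi \colon \cH \to L^2(G/Z)$ is $d_\pi^{-1/2}\|g\|$ times an isometry onto a closed subspace, and polarization yields the continuous reproducing formula
\begin{equation*}
f = d_\pi \int_{G/Z} \langle f, \pi(\dot x) g\rangle\, \pi(\dot x) g \, d\dot x
\end{equation*}
together with the convolution identity $V_g^\pi f = d_\pi\, (V_g^\pi f) \nat (V_g^\pi g)$ on $G/Z$. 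In this language \eqref{eq:3} asserts precisely that sampling elements of $V_g^\pi(\cH)$ on $\tilde\Lambda$ is an $\ell^2$-norm equivalence, so everything reduces to estimates for functions in this reproducing kernel space.

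Next I would exploit that $g$ is nice. Writing $g = \pi(k) g_0$ with $k \in W(C,\ell^1)(G)$, the kernel $V_g^\pi g$ inherits continuity together with a global $\ell^1$-type decay, i.e.\ $V_g^\pi g \in W(C,L^1)(G/Z)$. Combined with the amalgam convolution relation $L^2 \nat W(C,L^1) \subseteq W(C,L^2)$ and the reproducing identity above, this gives $V_g^\pi f \in W(C,L^2)(G/Z)$ with $\|V_g^\pi f\|_{W(C,L^2)} \lesssim \|f\|_\cH$. The upper frame bound is then immediate: for a relatively separated $\tilde\Lambda$, sampling a function of $W(C,L^2)$ produces an $\ell^2$-sequence whose norm is dominated by the amalgam norm, so $\sum_\lambda |\langle f, \pi(\lambda) g\rangle|^2 \leq B\|f\|_\cH^2$, with $B$ depending only on the covering multiplicity of $\tilde\Lambda$.

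The heart of the argument, and the step I expect to be the main obstacle, is the lower bound via an oscillation estimate. I would introduce the local oscillation $\mathrm{osc}_U F(\dot x) = \sup_{u \in U} |F(\dot x u) - F(\dot x)|$ and show, using the continuity and global control of $V_g^\pi g$, that $\|\mathrm{osc}_U (V_g^\pi f)\|_{W(C,L^2)} \leq \varepsilon(U)\, \|f\|_\cH$ with $\varepsilon(U) \to 0$ as $U$ shrinks to $\{e\}$. Fixing a bounded uniform partition of unity $\{\psi_\lambda\}$ subordinate to the cover $\{\tilde\lambda U\}$, I would compare the exact reconstruction with the sampled reconstruction operator
\begin{equation*}
S_U f = d_\pi \sum_{\tilde\lambda \in \tilde\Lambda} \Big(\int_{G/Z} \psi_\lambda \, d\dot x\Big)\, \langle f, \pi(\lambda) g\rangle\, \pi(\lambda) g \, .
\end{equation*}
Estimating $f - S_U f$ termwise replaces, on each patch $\tilde\lambda U$, both the value $V_g^\pi f(\dot x)$ and the vector $\pi(\dot x) g$ by their values at $\lambda$; each replacement is governed by the oscillation, so $\|f - S_U f\|_\cH \leq C\,\varepsilon(U)\,\|f\|_\cH$. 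The delicate point is that $\varepsilon(U)$ and $C$ must be controlled uniformly in $f$ and depend on $\tilde\Lambda$ only through its covering multiplicity, so that a single neighborhood $U$ serves every admissible $\tilde\Lambda$. Once $U$ is small enough that $C\varepsilon(U) < 1$, the operator $S_U$ is invertible by a Neumann series; since $S_U$ factors through the analysis coefficients $(\langle f, \pi(\lambda) g\rangle)_\lambda$, its invertibility forces the lower frame bound $\sum_\lambda |\langle f, \pi(\lambda) g\rangle|^2 \geq A\|f\|_\cH^2$. Finally, independence of the bounds from the choice of preimage $\Lambda$ is automatic: since $\pi|_Z = \chi(z)\mathrm{I}_\cH$, replacing $\lambda$ by $z_\lambda \lambda$ multiplies $\pi(\lambda) g$ by a unimodular scalar and leaves the sum in \eqref{eq:3} unchanged, as observed in the remark preceding the statement.
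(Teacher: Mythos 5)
The paper offers no proof of Theorem~\ref{tm1-abstr} at all: it is quoted from the literature, with the frame existence attributed to \cite[Thm.~4.1]{FG07} --- whose method is exactly sampling via oscillation estimates --- and, alternatively, to the abstract discretization result of \cite{FS19}. Your proposal is a blind reconstruction of the \cite{FG07} argument, and its architecture is sound: the isometry property of $V_g^\pi$ from \eqref{eq:2}, the upper bound in \eqref{eq:3} from relative separation plus amalgam membership of $V_g^\pi f$, the lower bound from an oscillation estimate feeding a Neumann series for the approximate reconstruction operator $S_U$, uniformity of all constants in $\tilde\Lambda$ through the covering multiplicity alone (which is what makes a single $U$ work for all admissible $\tilde\Lambda$), and the phase argument showing independence of the choice of preimage $\Lambda$ --- the last point being precisely the paper's remark preceding the theorem.

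There is, however, one step that does not hold as stated: from $g=\pi (k)g_0$ with $k\in W(C,\ell ^1)(G)$ and $g_0\in \cH$ \emph{arbitrary}, you cannot conclude $V_g^\pi g\in W(C,L^1)(G/Z)$. Expanding both windows gives $V_g^\pi g$ as a (twisted) convolution of $k$, $V_{g_0}^\pi g_0\in L^2(G/Z)$, and an involution of $k$, which lands only in $W(C,L^2)$; membership of the kernel in $W(C,L^1)$ is the strictly \emph{stronger} hypothesis of the original coorbit theory --- the paper says so immediately after the theorem (``the original coorbit theory~\cite{fg89jfa} required somewhat stronger assumptions on $g$ and $\pi$, namely the integrability of $\pi $ and $V_gg\in W(C,\ell ^1)(G)$''), and dispensing with it is the very point of \cite{FG07}. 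Even the automatic integrability of $\pi$ for nilpotent $G$ does not rescue the claim, since $g_0$ need not be a well-behaved vector. Fortunately the damage is local: both facts you extract from it, namely $\|V_g^\pi f\|_{W(C,L^2)}\lesssim \|f\|_{\cH}$ and $\|\mathrm{osc}_U(V_g^\pi f)\|\leq \varepsilon (U)\,\|f\|_{\cH}$ with $\varepsilon (U)\to 0$, follow directly from the identity $V_g^\pi f = V_{g_0}^\pi f \nat k^{*}$, valid for every $f\in \cH$, using $L^2 \nat W(C,\ell ^1)\subseteq W(C,L^2)$ and the pointwise bound $\mathrm{osc}_U(F\nat k)\leq |F| \nat \mathrm{osc}_U(k)$ together with $\|\mathrm{osc}_U(k)\|_{W(C,L^1)}\to 0$ as $U$ shrinks to $\{e\}$. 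So route the oscillation estimate through $k$ rather than through a presumed global decay of $V_g^\pi g$; with that repair (and two cosmetic fixes: the factor $\|g\|^{-2}$ missing from your reproducing formula, and the understanding that $\nat$ denotes the $\chi$-twisted convolution on $G/Z$, so that the phase ambiguity of $\pi (\dot x)$ cancels) your proof is complete and coincides with the one the paper cites.
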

 The existence of coherent frames was first proved in
~\cite[Thm.~4.1]{FG07} for arbitrary square-integrable
representations. Note that the original coorbit theory~\cite{fg89jfa}
required somewhat  stronger assumptions on $g$ and $\pi $, namely the 
integrability of $\pi $ and $V_gg \in W(C,\ell ^1)(G)$. 
For nilpotent Lie groups the square-integrability of $\pi $  modulo the center
automatically implies automatically its
integrability~\cite{CG90}. Theorem~\ref{tm1-abstr}  
also follows from the recent work~\cite{FS19} on the discretization of
arbitrary reproducing kernel Hilbert spaces. 

Theorem~\ref{tm1-abstr} amounts to a non-uniform sampling theorem for
the  transform  $V^\pi _gf$. For nilpotent Lie groups one can choose
the set $\Lambda $ to be a lattice or a quasi-lattice. We use the
following terminology: a set $\Lambda \subseteq G$ is called a
quasi-lattice of $G$ with relatively compact fundamental domain $K$, if $G = \bigcup
_{\lambda \in \Lambda } \lambda K = G$ and $\lambda K \cap \lambda ' K
= \emptyset$ for $\lambda \neq \lambda '$. If, in addition,  $\Lambda $ is a
subgroup, then $\Lambda $ is called a lattice of $G$. (Note subtle
differences of  terminology in the literature!). Quasi-lattices
exist in every simply connected nilpotent Lie group~\cite{FG07},
whereas  the existence of a lattice requires  a rational structure of
the Lie algebra of $G$~\cite{CG90}. 

For nilpotent groups we can add more structure in  Theorem~\ref{tm1-abstr}. 

\begin{prop}
Let $(\pi , \cH )$ be an irreducible unitary representation of $G$
that is square-integrable modulo center   and let $g\in \cH $ be a
nice vector. Then there exists a  quasi-lattice $\tilde{\Lambda} \subseteq G/Z$, such that $\cG (g,\Lambda ) = \{ \pi (\lambda ) g :
\lambda \in \Lambda \}$ is a frame for $\cH $. 
\end{prop}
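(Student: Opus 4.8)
The plan is to derive the statement directly from Theorem~\ref{tm1-abstr}. That theorem already furnishes, for the given nice vector $g$, a neighborhood $U\subseteq G/Z$ of $e$ such that \emph{every} $U$-dense, relatively separated set in $G/Z$ produces a frame. So the whole task reduces to exhibiting a \emph{quasi-lattice} $\tilde\Lambda\subseteq G/Z$ that happens to be $U$-dense and relatively separated. First I would fix such a $U$. The quotient $N:=G/Z$ is again a simply connected nilpotent Lie group (the center $Z=\exp\mathfrak{z}$ is connected, so $N$ is a simply connected nilpotent quotient), and the goal becomes to find a quasi-lattice in $N$ whose relatively compact fundamental domain $K$ is small enough that $K\subseteq U$.

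The heart of the matter is therefore the following lemma: in any simply connected nilpotent Lie group $N$, and for any neighborhood $U$ of $e$, there is a quasi-lattice with relatively compact fundamental domain contained in $U$. Since a general nilpotent group need not be homogeneous, one cannot simply shrink a fixed quasi-lattice by a one-parameter group of dilating automorphisms; instead I would argue by induction on $\dim N$, peeling off a one-dimensional central subgroup. For $N=\bR$ one takes $\epsilon\bZ$ with fundamental domain $[0,\epsilon)$. For the inductive step choose a central subgroup $C=\exp(\bR W)\cong\bR$; then $N/C$ is simply connected nilpotent of smaller dimension and, by the inductive hypothesis, carries a quasi-lattice $\bar\Lambda$ with small fundamental domain $\bar K$. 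Fixing a (polynomial) section $\sigma\colon N/C\to N$ with $\sigma(\bar e)=e$, I would set $\tilde\Lambda=\{\sigma(\bar\lambda)\exp(\epsilon kW):\bar\lambda\in\bar\Lambda,\ k\in\bZ\}$ and $K=\sigma(\bar K)\cdot\{\exp(sW):s\in[0,\epsilon)\}$. Because $C$ is central, the product $\sigma(\bar\lambda)\exp(\epsilon kW)\cdot\sigma(\bar\kappa)\exp(s'W)$ collapses to $\sigma(\bar\lambda\bar\kappa)\exp((\epsilon k+s'+c)W)$, where the cocycle $c=c(\bar\lambda,\bar\kappa)$ only shifts the $W$-coordinate. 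Hence the tiling equation $x=\lambda\kappa$ can be solved uniquely: one first determines $\bar\lambda$ from $\bar\lambda^{-1}\bar x\in\bar K$ in $N/C$, and the remaining scalar equation $\epsilon k+s'=(\text{$W$-coordinate of }x)-c$ has a unique solution with $k\in\bZ$ and $s'\in[0,\epsilon)$. This shows $K$ is a fundamental domain for $\tilde\Lambda$, and taking $\bar K$ and $\epsilon$ small makes $K\subseteq U$.

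With the lemma in hand the rest is routine. Since $K\subseteq U$ we have $N=\bigcup_{\tilde\lambda}\tilde\lambda K\subseteq\bigcup_{\tilde\lambda}\tilde\lambda U$, so $\tilde\Lambda$ is $U$-dense. Relative separation follows from a volume count: the translates $\tilde\lambda K$ are pairwise disjoint and left-invariance of Haar measure gives $|\tilde\lambda K|=|K|>0$, while every $\tilde\lambda\in xU$ satisfies $\tilde\lambda K\subseteq xUK$ with $|xUK|=|UK|<\infty$; hence $\#(\tilde\Lambda\cap xU)\le |UK|/|K|$ uniformly in $x$. Finally I would lift $\tilde\Lambda$ to an arbitrary preimage $\Lambda\subseteq G$ and invoke Theorem~\ref{tm1-abstr} to conclude that $\cG(g,\Lambda)$ is a frame for $\cH$. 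The only genuinely substantial step is the construction of a quasi-lattice with arbitrarily small fundamental domain: without dilations one must build it recursively through the central series, and the above central-extension induction is precisely what keeps the exact tiling property intact under refinement.
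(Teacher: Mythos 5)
Your proof is correct and follows essentially the same route as the paper: reduce everything to Theorem~\ref{tm1-abstr} and then show that $G/Z$ admits quasi-lattices with arbitrarily small fundamental domain. The paper gets these fine quasi-lattices from a coordinate box $K=\{e^{t_1X_1}\cdots e^{t_nX_n}: -\epsilon/2\le t_j<\epsilon/2\}$ attached to a strong Malcev basis of $\mathfrak{g}/\mathfrak{z}$, citing \cite[Lemma~3.9]{GR18} for the inductive tiling argument, and your central-extension induction (peeling off $\exp(\bR W)$ and solving the cocycle-shifted scalar equation uniquely) is precisely that induction written out explicitly, since the first vector of a strong Malcev basis spans a central direction.
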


\begin{proof}
  In view of Theorem~\ref{tm1-abstr} we only  need the existence of
  sufficiently fine quasi-lattices. This is essentially proved in
  \cite[Lemma~3.9]{GR18}. In brief, fix a strong Malcev basis
  $X_1, \dots X_n $ of $\mathfrak{g}/\mathfrak{z}$. Given a
  neighborhood $U\subseteq G/Z$, choose $\epsilon >0$, such that $K=
  \{ e^{t_1X_1} \dots e^{t_nX_n}: -\epsilon /2 \leq t_j < \epsilon
  /2\} $ is contained in $U$. Then the set $\Gamma = \{ e^{k_n\epsilon
    X_n} \dots e^{k_1\epsilon X_1} : k_n \in \bZ \}$ is a quasi-lattice
  with fundamental domain $K$. The proof by induction is identical to
  ~\cite{GR18}. 
\end{proof}

This qualitative result can be complemented by a necessary density
condition in the style of Landau~\cite{landau67}.  
As an appropriate metric on $G/Z$ we choose a word metric: fix a
symmetric neighborhood $W=W\inv $ of $e$ in $G/Z$ and let $d(x,y) = \min
\{ n\in \bN : y\inv x \in W^n\}$ for $x\neq y$. Denoting the balls with
respect to this metric by $\bar B(x,r) = \{ y\in G/Z: d(x,y) \leq
r\}=x\bar B(e,r)$,
the lower Beurling density of a set $\Lambda \subseteq G/Z $ is given by
$$
D^-(\tilde{\Lambda } ) = \liminf _{r\to \infty } \inf _{x\in G/Z} \frac{\#
\tilde{\Lambda} \cap \bar B(x,r)}{|\bar B(x,r)|} \, .
$$
As in $\rd $ the density $D^-(\tilde{\Lambda })$ is the average number of
points in a ball of radius $1$. 
For coherent frames with
respect to a square-integrable irreducible \rep\ the following density
result was proved in ~\cite{FGHKR}. 
\begin{tm}[Necessary density condition] \label{tm:density}
  Let $G$ be a nilpotent Lie group and $\pi \in SI(G/Z)$ be a
  square-integrable representation of $G$ modulo the center with
  formal dimension $d_\pi $. Let $g\in \cH $ be a nice vector  and
  $\tilde{\Lambda } \subseteq
  G/Z$.
  
  If $\cG (g,\Lambda ) $ is a frame for $\cH $, then
  $D^-(\tilde{\Lambda } )
  \geq d_\pi $
\end{tm}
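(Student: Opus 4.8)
The plan is to adapt the Ramanathan--Steger comparison-of-traces argument, in the form developed by Christensen, Deng and Heil, to the coorbit setting of a representation that is square-integrable modulo the center. Normalizing $\|g\|=1$, the orthogonality relation \eqref{eq:2} furnishes the continuous resolution of the identity $d_\pi\int_{G/Z}\langle\cdot,\pi(\dot x)g\rangle\,\pi(\dot x)g\,d\dot x=\mathrm{I}_\cH$. For a measurable set $\Omega\subseteq G/Z$ I would introduce the concentration operator
\[
G_\Omega \;=\; d_\pi\int_\Omega \langle\cdot,\pi(\dot x)g\rangle\,\pi(\dot x)g\,d\dot x,
\]
which is positive, satisfies $0\le G_\Omega\le\mathrm{I}_\cH$ (because $G_\Omega$ and $G_{\Omega^c}$ are both positive and sum to the identity), and has trace $\mathrm{tr}(G_\Omega)=d_\pi\,|\Omega|$ by \eqref{eq:2}. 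This operator plays the role of the essential dimension of the phase-space region $\Omega$, i.e.\ it quantifies how many degrees of freedom the representation packs into $\Omega$.

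The discrete side rests on the Homogeneous Approximation Property (HAP): for every $\varepsilon>0$ there is a radius $R=R(\varepsilon)>0$ so that every model vector $\pi(\dot y)g$ lies within $\varepsilon$ of the closed span of $\{\pi(\lambda)g:d(\dot\lambda,\dot y)\le R\}$. This is exactly where the hypothesis that $g$ be a \emph{nice} vector enters: niceness forces $V_g^\pi g\in W(C,\ell^1)(G/Z)$, so the coherent frame $\cG(g,\Lambda)$ is intrinsically localized, its canonical dual window is again well localized, and the frame expansion of a model vector converges with a tail that is uniformly small outside a ball of fixed radius. I expect this to be the main obstacle, since establishing the HAP with constants uniform in $\dot y$ requires the localization theory of frames together with the polynomial decay of $V_g^\pi g$ inherited from the Schwartz-type regularity of $g$.

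Granting the HAP, I would fix $x\in G/Z$ and $r>R$, set $\Omega=\bar B(x,r)$ and $\Omega'=\bar B(x,r-R)$, and let $P$ be the orthogonal projection onto $V=\overline{\mathrm{span}}\{\pi(\lambda)g:\dot\lambda\in\Omega\}$; by relative separatedness and the fact that different preimages in $Z$ alter $\pi(\lambda)g$ only by a unimodular phase, $\mathrm{rank}\,P\le\#(\tilde\Lambda\cap\Omega)$. For $\dot y\in\Omega'$ every frame vector within distance $R$ of $\dot y$ lies in $V$, so the HAP yields $\|P\,\pi(\dot y)g\|^2\ge 1-\varepsilon^2$. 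Using $\mathrm{tr}(P\,G_{\Omega'})=d_\pi\int_{\Omega'}\|P\,\pi(\dot y)g\|^2\,d\dot y$ together with $\mathrm{tr}(P\,G_{\Omega'})\le\mathrm{tr}(P)$ (valid since $0\le G_{\Omega'}\le\mathrm{I}_\cH$), the two descriptions collide in
\[
d_\pi(1-\varepsilon^2)\,|\Omega'|\;\le\;\mathrm{tr}(P\,G_{\Omega'})\;\le\;\mathrm{tr}(P)\;\le\;\#\bigl(\tilde\Lambda\cap\bar B(x,r)\bigr).
\]

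Dividing by $|\bar B(x,r)|$, taking the infimum over $x$ and the liminf over $r\to\infty$ then reduces the theorem to the geometric fact that fixed-width annuli are negligible: since $G/Z$ is a unimodular, simply connected nilpotent Lie group, it has polynomial volume growth, so $|\bar B(x,r)|=|\bar B(e,r)|\asymp r^D$ for the homogeneous dimension $D$, whence $|\bar B(x,r-R)|/|\bar B(x,r)|\to 1$ uniformly in $x$. This delivers $D^-(\tilde\Lambda)\ge d_\pi(1-\varepsilon^2)$, and letting $\varepsilon\to 0$ completes the argument. The only place where the nilpotent structure is genuinely used, beyond the HAP, is this last Følner-type volume estimate.
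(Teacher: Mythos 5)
Your overall architecture is the Ramanathan--Steger/Christensen--Deng--Heil scheme, and its skeleton is sound: the resolution of the identity from \eqref{eq:2}, the concentration operator with $0\le G_\Omega\le \mathrm{I}_\cH$ and $\mathrm{tr}\,G_\Omega = d_\pi |\Omega|$, and the trace pincer $d_\pi(1-\varepsilon^2)|\Omega'| \le \mathrm{tr}(P G_{\Omega'}) \le \mathrm{rank}\, P \le \#(\tilde\Lambda\cap \bar B(x,r))$ are all correct as stated. But the step you yourself identify as the crux is where the proof has a genuine gap: the claim that niceness of $g$ forces $V_g^\pi g\in W(C,\ell^1)(G/Z)$ is false in general. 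Niceness means $g=\pi(k)g_0$ with $k\in C_c(G)$ but $g_0\in\cH$ \emph{arbitrary}; then $V_g^\pi g$ is a two-sided convolution of $V_{g_0}^\pi g_0\in L^2(G/Z)$ with compactly supported continuous kernels, and the amalgam convolution relations yield only $V_g^\pi g\in W(C,\ell^2)(G/Z)$ --- no $\ell^1$ decay and no Schwartz-type regularity can be extracted from an arbitrary $g_0$. Consequently the localized-frames machinery \cite{FoG05} you invoke to localize the canonical dual and deduce the strong HAP with a radius $R(\varepsilon)$ uniform in $\dot y$ is not available under the stated hypotheses; with only Bessel bounds and $\ell^2$ tails, the far tails of the dual-frame coefficients cannot be made uniformly small, and the uniform HAP is precisely what remains unproved. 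A smaller issue: in the final step, $|\bar B(e,r)|\asymp r^D$ alone does \emph{not} imply $|\bar B(e,r-R)|/|\bar B(e,r)|\to 1$, since two-sided bounds with different constants are compatible with the ratio staying bounded away from $1$; one needs exact volume asymptotics $|\bar B(e,r)|\sim c\,r^D$ for the word metric, which do hold on nilpotent groups but must be invoked explicitly.

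For comparison: the paper does not prove Theorem~\ref{tm:density} at all --- it quotes it from \cite{FGHKR}, and the method there shows exactly how your gap is closed. Instead of the HAP, \cite{FGHKR} runs Landau's eigenvalue/trace comparison directly on the reproducing kernel $k(\dot x,\dot y)=d_\pi\langle \pi(\dot y)g,\pi(\dot x)g\rangle$ of the coorbit RKHS, comparing $\mathrm{tr}(T_\Omega)$ with $\mathrm{tr}(T_\Omega^2)$ for the concentration operators; the only off-diagonal decay required is the weak localization $\sup_{\dot x}\int_{d(\dot x,\dot y)>R}|k(\dot x,\dot y)|^2\,d\dot y\to 0$ as $R\to\infty$, which for a translation-invariant kernel follows from $V_g^\pi g\in L^2(G/Z)$, with the $W(C,\ell^2)$ control coming from niceness supplying the continuity and local bounds needed to count points against measure. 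This is why the theorem can be stated for nice vectors $g\in\cH$ rather than for analyzing windows in $\co L^1_v$. So either switch to that kernel-based Landau argument, or retreat to a weak form of the HAP in the style of Ramanathan--Steger that avoids dual-frame localization; as written, your proof establishes the conclusion only for the smaller class of windows with $V_g^\pi g\in W(C,\ell^1)(G/Z)$, e.g.\ the windows of Theorem~\ref{tm:bf}, not for all nice vectors.
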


For certain nilpotent Lie group one can even prove the  existence of
orthonormal bases in the orbit of every square-integrable
representation modulo the center~\cite{GR18}.
\begin{tm} \label{onb}
  Let $G$ be a graded Lie group   with  one-dimensional  center  and
$(\pi , \cH )$ be  a
square-integrable irreducible unitary representation modulo
center. Then there exists a discrete set $\Lambda \subseteq G$ and a
function $g\in \cH$, such
that $\cG (g,\Lambda )$ is an orthonormal basis for $\cH $. 
\end{tm}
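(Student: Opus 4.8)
The plan is to write down the pair $(g,\Lambda)$ explicitly, modeled on the classical fact that $\{M_nT_k\chi_{[0,1)^d}\}_{k,n\in\zd}$ is an \onb\ of $\lrd$, and to use the grading only to calibrate the lattice to the critical density $d_\pi$ of Theorem~\ref{tm:density}. First I would realize $\pi$ by Kirillov's theory on $L^2(M\backslash G)\cong\lrd$ using a strong Malcev basis through $\mathfrak z$ and the polarization $\mathfrak m$, exactly as in Section~\ref{psa}. In these coordinates $\pi(\dot x)$ is a generalized \tfs\ $g(t)\mapsto e^{2\pi iP(t,x)}g(t\cdot x)$, where $t\cdot x$ is the (possibly nonabelian) translation action of $M\backslash G$ and $P$ is a polynomial in the position variables $t$ and the momentum variables $y$ parametrizing $M/Z$. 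Because $G$ is graded with one-dimensional center, the dilations rescale $d_\pi$ homogeneously, and I would use them to fix the covolume of the integer Malcev quasi-lattice $\tilde\Lambda\subseteq G/Z$ so that its Beurling density equals exactly $d_\pi$, the threshold at which a frame can become a basis.

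Next I would take $g=\chi_E$ with $E$ a fundamental cube for the position-sublattice of $\tilde\Lambda$ in the Malcev coordinates. The \onb\ property then splits into two checks. For orthonormality, note that $\langle\pi(\lambda)g,\pi(\mu)g\rangle=\overline{V^\pi_gg(\mu\inv\lambda)}$, so distinct position indices give vectors with disjoint supports and orthogonality is immediate. For two lattice points sharing the same position index the inner product becomes an integral over a single tile of $e^{2\pi i(P(t,y)-P(t,y'))}$ against the $t$-variables; here the key structural fact is that, by the nilpotent commutation relations and the strong Malcev ordering, $P$ is \emph{triangular}, so that integrating the position coordinates in the right order first forces the conjugate (lowest) momentum coordinates to agree, and matching those coordinates in turn annihilates the chirp (higher-degree) contributions. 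Iterating this over the Malcev basis reduces orthonormality to the scalar identity $\int_{[0,1)}e^{2\pi i(n-n')u}\,du=\delta_{n,n'}$ one coordinate at a time.

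For completeness I would straighten the polynomial phase on a single tile: the map that subtracts the lower-order monomials of $P$ from the position coordinates modulo the lattice is volume preserving, so it induces a unitary of $L^2(E)$ carrying the generalized modulations $\{e^{2\pi iP(\cdot,y)}\}$ onto the standard exponential basis of $L^2(E)$. Since the latter is complete and the translated tiles exhaust $M\backslash G$, the system $\cG(g,\Lambda)$ is total in $\lrd$. Combining the two checks, $\cG(g,\Lambda)$ is an \onb.

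The main obstacle is that in general $M\backslash G$ is nonabelian---for the Dynin-Folland group one has $G/Z\cong\bH_1$---so the translation action does not commute with itself and the straightening diffeomorphism used for completeness must be compatible simultaneously with the noncommutative tiling of $E$ and with every polynomial phase at once. This is precisely where the hypotheses ``graded'' and ``one-dimensional center'' are needed: the grading supplies a homogeneous quasi-lattice at the critical density, and the combination of a one-dimensional center with the strong Malcev basis forces the triangular structure of $P$ that makes the coordinate-by-coordinate induction close. The detailed verification, peeling off one Malcev coordinate at each step and reducing to the scalar Fourier identity above, is carried out in \cite[Lemma~3.9 and the surrounding construction]{GR18}, and I would follow that induction to finish the argument.
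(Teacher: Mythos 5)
The paper does not prove Theorem~\ref{onb} internally: it is imported verbatim from \cite{GR18}, so the only fair comparison is with that paper's argument, and your sketch does reproduce its broad strategy (realize $\pi $ in adapted Malcev coordinates as a generalized time-frequency shift $e^{2\pi i P(t,y)}g(t\cdot x)$, take $g=\chi _E$ for a fundamental tile, check orthonormality coordinate by coordinate using a triangular phase, and get completeness from a volume-preserving shear that carries the generalized modulations onto the exponential basis of $L^2(E)$). However, as a standalone proof your proposal has a genuine gap exactly at the point where the hypotheses enter: the triangular form of $P$ is \emph{asserted}, not derived. A strong Malcev basis through $\mathfrak{z}$ and $\mathfrak{m}$ does not by itself force $P$ to be triangular in your sense; the real work in \cite{GR18} is to use the grading together with $\dim \mathfrak{z}=1$ to construct an adapted homogeneous basis and a realization of $\pi $ in which the phase has the required structure, and to control how the nonabelian translation action $t\mapsto t\cdot x$ distorts the phases on shifted tiles (your Dynin--Folland remark identifies the difficulty but does not resolve it). Since your final step explicitly defers this induction back to \cite{GR18}, the attempt is a faithful outline of the cited proof rather than an independent argument.

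Two smaller points. First, the density calibration is a red herring as a proof step: Theorem~\ref{tm:density} gives only a \emph{necessary} condition $D^-(\tilde{\Lambda })\geq d_\pi $, and having Beurling density exactly $d_\pi $ in no way makes a frame into a basis; in \cite{GR18} the dilations of the graded group are used to normalize the representation so that the integer quasi-lattice works, but orthonormality and completeness are then verified directly, as you do later, so the density discussion carries no logical weight. Second, your citation is misdirected: \cite[Lemma~3.9]{GR18} is the existence result for arbitrarily fine quasi-lattices (used in Section~4 of the present paper for frames), not the orthonormal-basis construction, which is the main theorem of \cite{GR18} and rests on the adapted-basis and triangular-phase analysis you left unproved.
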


 See also~\cite{Ous18,Ous19} for additional
observations. It is currently an open problem whether Theorem~\ref{onb} can be
extended to all nilpotent Lie groups and all square-integrable
representations modulo the center.

Theorem~\ref{tm1-abstr} possesses a version for  coorbit spaces in~\cite{fg89jfa} as
follows:
\begin{tm}[Banach frames] \label{tm:bf}
Let $g$ be a nice vector in $ \co L^1_v(G)$, e.g., $g\in \cS
(G/Z)$. Assume that $1\leq p \leq \infty $ and that $m$ is
$v$-moderate. Then   there exists a neighborhood U in $G/Z$ with the
following 
property:
If   $\tilde{\Lambda } \subseteq G/Z$ is $U$-dense and relatively
separated and $\Lambda \subseteq G$ is a section of $\tilde{\Lambda}$, then 
there exists a dual frame
$\{ e_\lambda : \lambda \in \Lambda \}$ in  $\co
L^1_v(G/Z)$, such that for all $f\in \colpm (G/Z) $ 
\begin{align}
f& = \sum _{\lambda \in \Lambda } \langle f, e_\lambda \rangle \pi
   (\lambda )g = \sum _{\lambda \in \Lambda } \langle f, \pi (\lambda
   ) g \rangle e_\lambda \label{eq:6} \\ 
  \| \big( \langle f, \pi (\lambda ) g\rangle \big)_{\lambda \in
    \Lambda }  \|_{\lpm} & \asymp   \| \big( \langle f, e_\lambda \rangle \big)_{\lambda \in
    \Lambda }  \|_{\lpm} \asymp   \|f\|_{\colpm} \, .  \label{eq:5}
\end{align}
  The convergence in \eqref{eq:6} is in the norm of $\colpm $ for $p<
  \infty $ and in $\sigma (\co L^\infty _{1/v}, \co L^1_v)$ for
  $p=\infty $.
\end{tm}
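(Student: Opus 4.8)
The plan is to deduce the statement from the abstract Banach-frame machinery of coorbit theory \cite{fg89jfa,FG07}, checking that its hypotheses are met in the present $SI(G/Z)$ setting and pointing to where the substance lies. First I would record the reproducing formula that underlies the whole correspondence. Normalizing $g$ so that $\|g\|^2 = d_\pi$, the orthogonality relation \eqref{eq:2} makes $V_g^\pi$ an isometry and yields the reproducing identity $V_g^\pi f = V_g^\pi f \nat V_g^\pi g$, where $\nat$ denotes the (projective) convolution on $G/Z$. Equivalently, $V_g^\pi$ maps $\colpm(G/Z)$ isometrically onto the closed reproducing-kernel subspace $\{F \in L^p_m(G/Z) : F = F \nat V_g^\pi g\}$, which is the correspondence principle of \cite{fg89jfa}. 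Since $G$ is nilpotent and $\pi$ is square-integrable modulo the center, $\pi$ is automatically integrable \cite{CG90}, and the niceness of $g$ guarantees that the kernel $V_g^\pi g$ lies in the weighted Wiener amalgam space $W(C,\ell^1_v)(G/Z)$. This single membership is what makes convolution with $V_g^\pi g$ bounded on every $L^p_m(G/Z)$ and is the hypothesis that drives the discretization.

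The core step is the discretization of the reproducing integral, of which Theorem~\ref{tm1-abstr} is the $p=2$, unweighted instance. Given a $U$-dense relatively separated $\tilde\Lambda$, I would fix a bounded uniform partition of unity $\{\psi_{\tilde\lambda}\}$ subordinate to the cover $\{\tilde\lambda U\}$ and form the operator $T_{\tilde\Lambda} F = \sum_{\tilde\lambda} \langle F, \psi_{\tilde\lambda}\rangle\, L_{\tilde\lambda} V_g^\pi g$, where $L_{\tilde\lambda}$ is left translation. On the reproducing-kernel subspace the difference $\mathrm{Id} - T_{\tilde\Lambda}$ has operator norm controlled by the oscillation $\mathrm{osc}_U(V_g^\pi g)$ measured in the amalgam norm of $W(C,\ell^1_v)$. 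By continuity of $x \mapsto \pi(x)g$ together with $V_g^\pi g \in W(C,\ell^1_v)$, this oscillation tends to $0$ as $U$ shrinks, so a sufficiently small $U$ forces $\|\mathrm{Id} - T_{\tilde\Lambda}\| < 1$ simultaneously on all the spaces $L^p_m$, $1\leq p\leq\infty$, and $T_{\tilde\Lambda}$ becomes invertible by a Neumann series. Transporting this invertibility back through the isometry $V_g^\pi$ shows that the frame operator $S = \sum_{\tilde\lambda}\langle\,\cdot\,,\pi(\lambda)g\rangle\,\pi(\lambda)g$ is bounded and invertible on every $\colpm$; the canonical dual atoms $e_\lambda = S^{-1}\pi(\lambda)g$ then lie in the smallest coorbit space $\co L^1_v$, and unwinding $S^{-1}S = \mathrm{Id}$ produces the symmetric reconstruction \eqref{eq:6}. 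The norm equivalences \eqref{eq:5} are the boundedness, in both directions, of the analysis map $f\mapsto(\langle f,\pi(\lambda)g\rangle)_\lambda$ and the synthesis map between $\lpm$ and $\colpm$; the stated convergence (in norm for $p<\infty$, and $\sigma(\co L^\infty_{1/v},\co L^1_v)$ for $p=\infty$) follows from these boundedness properties together with the duality in Proposition~\ref{props}(iii).

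Finally I would dispose of the passage from $G/Z$ to $G$. Because $\pi|_Z$ is scalar, replacing $\lambda$ by $z_\lambda\lambda$ multiplies $\pi(\lambda)g$ only by a unimodular phase, so all of \eqref{eq:6} and \eqref{eq:5} are insensitive to the choice of section $\Lambda\subseteq G$ of $\tilde\Lambda$; this is exactly the central ambiguity noted before Theorem~\ref{tm1-abstr}. I expect the main obstacle to be the discretization/invertibility step, namely making the oscillation estimate genuinely quantitative in the \emph{weighted} amalgam space and ensuring $\|\mathrm{Id} - T_{\tilde\Lambda}\|<1$ uniformly over all $U$-dense relatively separated $\tilde\Lambda$ and all admissible $(p,m)$ at once. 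Everything else reduces either to the orthogonality relation \eqref{eq:2}, to the amalgam membership supplied by niceness, or to routine transfer through $V_g^\pi$; since all these ingredients are available in \cite{fg89jfa,FG07}, the proof ultimately amounts to verifying that the $SI(G/Z)$ framework of Section~\ref{general} fits their hypotheses, which we have arranged from the outset.
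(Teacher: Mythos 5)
Your proposal follows exactly the route the paper takes: the paper gives no independent argument for Theorem~\ref{tm:bf} but defers to the abstract discretization machinery of \cite{fg89jfa,gro91,FG07} (correspondence principle, oscillation estimates in $W(C,\ell^1_v)$, Neumann-series invertibility, canonical dual atoms in $\co L^1_v$), which is precisely what you reconstruct, together with the same two verifications the paper singles out, namely integrability of $\pi$ via \cite{CG90} and the insensitivity of \eqref{eq:6}--\eqref{eq:5} to the choice of section $\Lambda \subseteq G$ of $\tilde{\Lambda}$ because $\pi |_Z$ is scalar. Your sketch is correct and matches the paper's intended proof.
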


  \begin{rem}
1.   If $\Lambda $ is  a lattice in $G/Z$, then there
  exists a dual window $\gamma \in \co L^1_v$, such that the dual 
  frame is given by $\{ \pi (\lambda ) \gamma : \lambda \in \Lambda
  \}$. The proof is the same as for the Schr\"odinger representation
  of the Heisenberg group. 

2. More information about the dual $\{e_\lambda \}$ is derived in
~\cite{FRRV20,RVV20}. 
    \end{rem}

    \section*{Appendix}

    For quick reference we provide the computation of the \stft\ of
    chirps, as it can be found, for instance
    in~\cite{CN08a}. Let $B,C$ two \emph{real}-valued symmetric
    $d\times d$-matrices with $B$ positive semi-definite and $A =
    B+iC$.  Then $A^T = A$ as well. The
    associated Gaussian is $\phi _A(x) = e^{-\pi Ax\cdot x}$, where
    $x\cdot y = \sum _j x_j y_j$ is the inner product on $\rd $. The
    Fourier transform of $\phi _A$ is given as 
    \begin{equation}
      \label{eq:app1}
      \widehat{\phi _A}(\xi ) = (\det A) ^{-1/2} e^{-\pi A\inv \xi
        \cdot \xi } \, , \qquad \xi \in \rd \, .
    \end{equation}
This formula holds for $\xi \in \rd $ and  real-valued
positive-definite  $A$. By
analytic continuation \eqref{eq:app1}  extends to $\xi \in \cd$ and
complex-valued matrices with the understanding that the branch of the
square-root is determined by the requirement that $(\det A)^{-1/2}>0$
for real-valued positive-definite $A$. See~\cite{folland89}.

Now let $\cN _C f(t) = e^{-\pi i C t\cdot t} f(t), t\in \rd $,  be the
operator of multiplication by the chirp $e^{-\pi i C t\cdot t}$
(with $C^T=C$). We compute the modulus of the \stft\ with respect to the Gaussian
$\phi = \phi _\mathrm{I}$. 
\begin{align}
  S_\phi (\cN _C \phi ) (x,\xi ) &= \intrd e^{-\pi i Ct\cdot t}
 e^{-\pi |t|^2} e^{-\pi |t-x|^2} \, e^{-2\pi i \xi \cdot t} \, dt
                                   \notag \\
&= e^{-\pi |x|^2} \,  \intrd e^{-\pi (2\id + iC)t\cdot t} e^{-2\pi i
                                               t\cdot (\xi +ix)} \,
                                               dt \notag \\
&= \det (2\id + iC) ^{-1/2} \, e^{-\pi |x|^2} \,  e^{-\pi (2\id (\xi
 +ix)\cdot  (\xi +ix)}  \, .  \label{eq:ll1}
\end{align}
The determinant is  
$$
|\det (2\id +iC)| = |\det (2\id +iC) \det (2\id - iC)|^{1/2} = \det
(4\id + C^2) ^{1/2} \, .
$$
Writing $D= (4\id + C^2)\inv $ and $(2\id + iC)\inv = (4\id + C^2)\inv
(2\id - iC) = 2D - iDC$, we find after some algebraic manipulations
that the real part of the exponent in~\eqref{eq:ll1}  is given by
$$
\mathrm{Re} \, D(2\id -iC) (\xi +ix) \cdot (\xi +ix) + x\cdot x  = 2D\xi \cdot \xi
+(\emph{I} -  2D) x\cdot x + 2DC\xi \cdot x \, .
$$
This is again a quadratic form, now on $\bR ^{4d}$, and for its
description we use the following abbreviations: $z=(\xi,x)\in \bR
^{4d}$ and
$$
\Delta =
\begin{pmatrix}
  2D &  DC \\ DC & \emph{I}-2D 
\end{pmatrix} \, .
$$
With this notation  the modulus of the \stft\ is
\begin{equation}
  \label{eq:app2b}
  |S_\phi (\cN _C\phi )(x,\xi )| = \det (4\id + C^2)^{-1/4} e^{-\pi
    \Delta z\cdot z} \,  ,
\end{equation}
and its $p$-norm is therefore
\begin{align}
  \|\cN _C\phi \|_{M^p} &= \|S_\phi (\cN _C \phi )\|_{L^p(\bR ^{4d})}
                          \notag \\
  &= \det (4\id + C^2)^{-1/4} \, (\det p\Delta )^{-\frac{1}{2p}} \, ,   \label{eq:app3b}
\end{align}
where the last identity is obtained from \eqref{eq:app1} applied to
the Gaussian $e^{-\pi p 
  \Delta z\cdot z}$ at $z=0$. To compute the determinant of $\Delta $
we use the (block) factorization 
$$
\Delta
\begin{pmatrix}
  0 & C \\ \emph{I} &-2\mathrm{I}
\end{pmatrix}
=
\begin{pmatrix}
  DC & 0 \\ \id -2D & -\id
\end{pmatrix}
$$
As the determinant of a matrix with a $0$-block is easy to compute, we
obtain
$$
\det \Delta \, \det C \det (-\id ) = \det (DC) \, \det (-\id ) \, ,
$$
and this implies that
\begin{equation}
  \label{eq:p2}
  \det \Delta = \det D = \det (4\id + C^2)\inv \, .
\end{equation}
This derivation is rigorous for invertible $C$ and extends to
arbitrary $C$ by continuity. See~\cite{folland89}, Appendix. 
The final result is therefore 
\begin{equation}
  \label{eq:p3}
\|S_\phi (\cN _C \phi )\|_{L^p(\bR ^{2d})} = (p^{-\frac{1}{2p}})^{4d} \det
(4\id + C^2)^{\frac{1}{2p} - \frac{1}{4}} \, .
\end{equation}

\vspace{3mm}

\textbf{Acknowledgement:} I would like to thank Jordi van Velthoven,
Univ.~of Vienna, and David Rottensteiner, Ghent University, for
their useful feedback. 

\def\cprime{$'$} \def\cprime{$'$} \def\cprime{$'$} \def\cprime{$'$}
  \def\cprime{$'$} \def\cprime{$'$}


\end{document}